\tikzset{
  my tree/.style={
    <-, 
    nodes={minimum size = .25cm},
    >=Stealth[],
   level 1/.style={sibling distance=30pt}
  },
}
\newtheorem{theorem}{Theorem}
\newtheorem{definition}[theorem]{Definition}
\newtheorem{proposition}[theorem]{Proposition}
\newtheorem{corollary}[theorem]{Corollary}
\newtheorem{lemma}[theorem]{Lemma}
\theoremstyle{remark}
\newtheorem{example}[theorem]{Example}
\def\CaH{\mathcal{H}}
\def\CaC{\mathcal{C}}
\def\N{\mathbb{N}}
\def\Z{\mathbb{Z}}
\def\Q{\mathbb{Q}}
\def\Ap{\mathrm{Ap}}
\def\Fb{\mathrm{Fb}}
\def\mult{\mathrm{mult}}
\title{On ideals of affine semigroups and affine semigroups with maximal embedding dimension}
\date{}
\author{
J. I. Garc\'{\i}a-Garc\'{\i}a,
R. Tapia-Ramos,
and A. Vigneron-Tenorio
}
\begin{document}

\maketitle

\begin{abstract}
Let $S\subseteq \N^p$ be a semigroup, any $P\subseteq S$ is an ideal of $S$ if $P+S\subseteq P$, and an $I(S)$-semigroup is the affine semigroup $P\cup \{0\}$, with $P$ an ideal of $S$. We characterise the $I(S)$-semigroups and the ones that also are $\CaC$-semigroups. Moreover, some algorithms are provided to compute all the $I(S)$-semigroups satisfying some properties. From a family of ideals of $S$, we introduce the affine semigroups with maximal embedding dimension, characterising them and describing some families.
\end{abstract}

{\small

{\it Key words:}  Affine semigroup, Apery set, $\CaC$-semigroup, embedding dimension, genus, Frobenius element, ideals, membership problem.

{\it Mathematics Subject Classification 2020:} Primary 20M14; Secondary 20M12.

\section*{Introduction}
An affine semigroup $S$ of $\N^p$ (for a non-zero natural number $p$) is a commutative additive submonoid of $\N^p$ containing the zero element, and such a finite generating set exists. That is, there exists a finite subset $\{n_1,\ldots ,n_r\}\subset S$ satisfying $S= \left\{ \sum _{i=1}^r \lambda_in_i\mid \lambda_1,\ldots ,\lambda_r\in \N\right\}$. If $p=1$ and generators $n_1,\ldots ,n_r$ are coprime, the semigroup $S$ is called numerical semigroup. Consider the non-negative integer cone generated by a set $B\subset \N^p$ as the set 
$$\CaC_B=\left\{ \sum _{i=1}^k \lambda_i b_i\mid k\in \N,\, \lambda_1,\ldots ,\lambda_k\in \Q_{\ge 0},\mbox{ and } b_1,\ldots ,b_k\in B\right\}\cap \N^p.$$
Assume that $\{\tau_1,\ldots ,\tau_t\}$ is the set of extremal rays of $\CaC_S$, and $n_i\in \tau_i$ for every $i=1, \ldots ,t$. It is well-known that $\CaC_S$ and $S$ are finitely generated if the last condition holds (see \cite[Corollary 2.10]{Brunz}). A semigroup $S$ is called simplicial if $t=p$. In general, given a cone $\CaC\subseteq \N^p$, a semigroup $S\subseteq \CaC$ is called $\CaC$-semigroup if $\CaC\setminus S$ is a finite set, with $\CaC$ equal to $\CaC_S$. Note that any numerical semigroup satisfies this property. Since the generators are coprime for numerical semigroups, we have that $\N\setminus S$ is finite. For any semigroup $S$, $\CaH(S)$ denotes the set $\CaC_S\setminus S$, whose elements are called gaps of $S$. The cardinality of its gap set is known as the genus of $S$ and is denoted by g(S). 
We call $i$-multiplicity of $S$, denoted by $\mult_i(S)$, the minimum element in $\tau_i\cap S$ for the componentwise partial order in $\N^p$. 
Other invariants require considering a monomial order on $\N^p$, a monomial order on $\N^p$ is a total order $\preceq$ on $\N^p$ satisfying compatibility with addition, and ensuring $0 \preceq c$ for any $c \in \N^p$. For example, the Frobenius element $\Fb_\preceq(S)$ of a $\CaC$-semigroup $S$ is defined as $\max_{\preceq}(\CaC \setminus S)$. To simplify the notation, fixed a monomial order $\preceq$ on $\N^p$, we use the symbol $\Fb(S)$ instead of $\Fb_\preceq(S)$. Note that $\Fb(S)$ depends on the fixed monomial order.

In this work, we explore the properties of ideals of affine semigroups; a subset $P$ of a semigroup $S$ is an ideal of $S$ if $P+S\subseteq P$, and observe that $P=S$ if and only if $0\in P$. The ideals of affine semigroups have been widely treated in the literature (to mention some of them, see \cite{Anderson1984, IdealFG, IrrIdeals}), and there exists a large list of publications devoted to the study of ideals of numerical semigroups. Some examples are \cite{Barucci, ideal R-M, CoutingMUlT2, MED_numericos} and the references therein. A recent reference is \cite{Cisto}, where the property of cofiniteness of ideals in affine semigroups is characterised. In \cite{CoutingMUlT2}, the authors introduce the concept of numerical $I(S)$-semigroup, given a numerical semigroup $S$, a numerical semigroup $P$ is called $I(S)$-semigroup if $P\setminus\{0\}$ is an ideal of $S$. Following this line of research, our work extends properties from numerical semigroups and their $I(S)$-semigroups and ideals to non-numerical affine semigroups, providing some results that are only satisfied by non-numerical affine semigroups. In particular, given $S$ an affine semigroup, we focus on characterising its affine $I(S)$-semigroups, and assuming that $S$ is a $\CaC$-semigroup, we identify those $I(S)$-semigroups that are also $\CaC$-semigroups. Moreover, we present some algorithms for computing objects related to $I(S)$-semigroups. On the one hand, for any affine semigroup $S$, we outline the description of all the $I(S)$-semigroups up to a given genus, which allows us to arrange the set of all $I(S)$-semigroups in a tree. On the other hand, for any $\CaC$-semigroup $S$, we turn out our attention on determining all the $I(S)$-semigroups with a fixed Frobenius element and a fixed set of $i$-multiplicities.

We generalise to higher dimensions the concept of numerical semigroup with maximal embedding dimension by considering ideals $M+S$ with $M=\{m_1,\ldots ,m_t\}\subset S$ such that $m_i\in \tau_i\setminus\{0\}$ for any $i\in [t]$. This new kind of affine ideals leads us to introduce affine semigroups with maximal embedding (MED-semigroup). A MED-semigroup is an affine semigroup such that all the elements in $\cap _{i=1}^t \Ap(S,n_i)\setminus\{0\}$ are minimal generators of $S$. The set $\Ap(S,m)$ denotes the Apery set of $S$ for $m\in S\setminus\{0\}$, defined as $\Ap(S,m)=\{ s\in S\mid s-m\notin S\}$. We prove that the $I(S)$-semigroup $(M+S)\cup \{0\}$ is an affine MED-semigroup. Furthermore, we characterise MED-semigroups using $I(S)$-semigroups. Our findings also provide a method for computing as many non-numerical affine MED-semigroups as desired.

Another of our objectives is to study the membership problem for an affine semigroup $S$: given an element $x$ in $\N^p$, checking whether $x$ belongs to $S$. This is an essential problem in the context of affine semigroups. Most existing methods are related to find non-negative integer solutions to some system of linear Diophantine equations. In particular, $x\in S$ means that there exist $\lambda_1,\ldots, \lambda_r\in \N$ such that $x=\sum_{i=1}^r \lambda_i n_i$. Several algorithms to find such a non-negative solution are shown in \cite{Nsemig}, and the references therein. However, the computational complexity of these methods grows with the number of variables, and the cardinality of the minimal generating set of an affine semigroup can be very large. For example, for $\CaC$-semigroups, this high cardinality can be inferred from the study made in \cite{CharactCsemig}. For a fixed numerical semigroup $S$, knowing its Apery set for one non-zero element in $S$, it is easy to solve the membership problem for $S$. Inspired by this idea, we provide an algorithm to solve this membership problem using Apery sets. Unfortunately, the Apery set of a non-numerical affine semigroup for one non-zero of its elements is not finite. However, the intersection of the Apery sets of some non-zero elements in $S$ is. We use this fact to design an algorithm for the membership problem for any non-numerical simplicial affine semigroup.

The content of this work is organised as follows: in Section \ref{IdealAffineSmgp}, we study several properties of ideals of affine semigroups and introduce the necessary background about these ideals. Sections \ref{treesection} and \ref{FrobMultSection} are devoted to improve the knowledge of $I(S)$-semigroups and to describe a tree containing all the $I(S)$-semigroups. Besides, some algorithms are given to compute the sets of all $I(S)$-semigroups up to a fixed genus, with a fixed Frobenius element, and with a set of fixed $i$-multiplicities. In Section \ref{MEDSection}, affine MED-semigroups are defined and characterised, and several families are provided. In the last section (Section \ref{AperySection}), we use some Apery sets to give an algorithm to solve the membership problem for affine semigroups. The results of this work are illustrated with several examples. To this purpose, we implemented all the algorithms shown in this work in some libraries developed by the authors in Mathematica \cite{Mathematica}.

\section{Ideals of affine semigroups}\label{IdealAffineSmgp}

Let us start by introducing some notations. Moving forward, we characterise the affine ideals and those that are also $\CaC$-semigroups. For any integers $a, b \in \N$ with $a \leq b$, we denote the integer interval $[a, b]$ as the set ${a, a+1, \ldots, b}$, and we denote the set $[n]$ as ${1, 2, \ldots, n}$.
Let $B\subset\N^p$ be a non-empty set and $x,y\in\N^p$, in this work, we consider the partial order $x\leq_B y$ if $ y-x\in B$. Given an affine semigroup $S\subset \N^p$ minimally generated by the set $\{n_1,\ldots n_t,n_{t+1},\ldots ,n_r\}$, we denote by $E$ and $A$ the sets $\{n_1,\ldots ,n_t\}$ and $\{n_{t+1},\ldots ,n_r\}$, respectively. Hereafter, we assume that $n_i=\mult_i(S)$ for any $i\in [t]$. That implies $E$ is the set of $i$-multiplicities of $S$.

Recall that a subset $P$ of an affine semigroup $S$ is an ideal of $S$ if $P+S\subseteq P$. Given a set $B$, we say that a non-empty subset $X$ of $B$ is $B$-incomparable if $x - x' \notin B$ for all  $x, x' \in X$ distinct from each other (see \cite{ideal R-M}). For instance, in the context of an affine semigroup $S$, given a non-empty subset $X$ of $S$, the set $Minimals_{\leq_S}(X)$ is $S$-incomparable.
If $X$ is a non-empty subset of an affine semigroup $S$, then $X+S$ is an ideal of $S$. We mention this case because every ideal of $S$ can be expressed in this way. This expression is not unique; that is, two different finite subsets $X_1$ and $X_2$ of $S$ could exist such that $X_1+S=X_2+S$. One such example is to consider $X_2=Minimals_{\leq_S}(X_1)$. Just as it occurs for numerical semigroups, we can achieve the desired uniqueness by imposing additional conditions. The following theorem generalises to affine semigroups Theorem 5 in \cite{ideal R-M}.

\begin{theorem}\label{thrIdealX+S}
    Let $S$ be an affine semigroup. Then,
    \[
    \{X+S\mid X \text{ is } S\text{-incomparable}\}
    \]
    is the set formed by all the ideals of $S$. Moreover, if $X_1$ and $X_2$ are different $S$-incomparable sets, then $X_1+S\ne X_2+S$.
\end{theorem}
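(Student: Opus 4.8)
The two assertions together amount to saying that the assignment $X\mapsto X+S$ is a bijection from the $S$-incomparable subsets of $S$ onto the ideals of $S$, and the plan is to prove this by exhibiting $P\mapsto Minimals_{\leq_S}(P)$ as its two-sided inverse. The equality of the two displayed sets will give surjectivity of $X\mapsto X+S$, and the moreover clause will give injectivity.

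For the equality, one inclusion is already available: by a fact recorded just before the statement, $X+S$ is an ideal for every nonempty $X\subseteq S$, so every $X+S$ with $X$ being $S$-incomparable is an ideal. For the reverse inclusion I would take an arbitrary ideal $P$ and set $X=Minimals_{\leq_S}(P)$, which is nonempty and $S$-incomparable by the remark preceding the theorem. That $X+S\subseteq P$ is immediate from $X\subseteq P$ and $P+S\subseteq P$. For $P\subseteq X+S$ I would fix $p\in P$, locate a $\leq_S$-minimal element $m$ of $P$ with $m\leq_S p$, and write $p=m+(p-m)$ with $p-m\in S$, so that $p\in X+S$.

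The step I expect to be the crux is guaranteeing that such minimal elements exist, that is, the well-foundedness of $\leq_S$. Here I would note that $x\leq_S y$ forces $x\leq y$ componentwise, because $y-x\in S\subseteq\N^p$; hence any strictly $\leq_S$-decreasing chain is strictly decreasing for the componentwise order, and Dickson's lemma forbids infinite such chains. Concretely, the set $\{q\in P\mid q\leq_S p\}$ is a nonempty subset of $\N^p$, so it has a componentwise-minimal element $m$, and minimality for the componentwise order forces $m$ to be $\leq_S$-minimal in $P$.

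Finally, for injectivity I would isolate the lemma that $X=Minimals_{\leq_S}(X+S)$ for every $S$-incomparable $X$; granting it, $X_1+S=X_2+S$ forces $X_1=X_2$ at once. One inclusion is short: a $\leq_S$-minimal element of $X+S$ has the form $x+s$ with $x\in X$, and since $x\leq_S x+s$ its minimality forces $s=0$, placing it in $X$. For the other inclusion I must verify that each $x\in X$ remains minimal in $X+S$; if some $y=x'+s\in X+S$ had $y<_S x$, then $x-x'=(x-y)+s\in S$, which contradicts $S$-incomparability when $x'\neq x$, while the case $x'=x$ puts both $s$ and $-s$ in $S\subseteq\N^p$, forcing $s=0$ and hence $y=x$, again a contradiction. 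This last argument uses only the defining property of $S$-incomparability together with $S\cap(-S)=\{0\}$.
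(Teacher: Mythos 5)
Your proof is correct. Note that the paper itself gives no proof of this theorem: it is stated as the affine generalisation of Theorem 5 of \cite{ideal R-M}, and the surrounding text only records afterwards (Lemma \ref{imsg}) that the unique $S$-incomparable set generating an ideal $P$ is $Minimals_{\leq_S}(P)$. Your argument is precisely the one this framework presupposes --- exhibiting $P\mapsto Minimals_{\leq_S}(P)$ as a two-sided inverse of $X\mapsto X+S$ --- so it follows the paper's implicit route rather than departing from it, while supplying the details the paper leaves to the reference. The ingredient that is genuinely new in the affine setting, and which you correctly identified as the crux, is the existence of $\leq_S$-minimal elements: in the numerical case one simply takes the minimum of a subset of $\N$, whereas for $p>1$ one needs well-foundedness of $\leq_S$, which your reduction ($x\leq_S y$ forces $x\leq y$ componentwise, then Dickson's lemma / well-foundedness of the componentwise order) establishes; your passage from a componentwise-minimal element of $\{q\in P\mid q\leq_S p\}$ to a $\leq_S$-minimal element of $P$ is also sound, using transitivity of $\leq_S$. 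Likewise, the injectivity lemma $X=Minimals_{\leq_S}(X+S)$, including the degenerate case $x'=x$ settled by $S\cap(-S)=\{0\}$ in $\N^p$, is complete and is the exact analogue of what \cite{ideal R-M} proves for numerical semigroups.
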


In fact, for any ideal $P$ of an affine semigroup $S$, there exists a unique $S$-incomparable subset $X$ of $S$ such that $P=X+S$. Using the terminology given in \cite{ideal R-M}, $X$ is the ideal minimal system of generators of $P$, denoted by $imsg_S(P)$. The following lemma generalises to affine semigroups Proposition 6 in \cite{ideal R-M}.

\begin{lemma}\label{imsg}
    Let $P$ be an ideal of an affine semigroup $S$. Then, $imsg_S(P)$ is equal to $Minimals_{\leq_S}(P)$.
\end{lemma}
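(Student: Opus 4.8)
The plan is to show that $M := Minimals_{\leq_S}(P)$ is an $S$-incomparable subset of $S$ satisfying $M+S=P$, and then to invoke the uniqueness part of Theorem \ref{thrIdealX+S} to conclude that $M$ must coincide with $imsg_S(P)$. The first preparatory observation is that $\leq_S$ is genuinely a partial order refining the componentwise order: if $x\leq_S y$ then $y-x\in S\subseteq\N^p$, so $x\leq y$ componentwise, and antisymmetry follows because $y-x\in S$ and $x-y\in S$ force $y-x=0$ in $\N^p$. Since $\leq_S$ is contained in the componentwise order on $\N^p$, it is well-founded (there are no infinite strictly $\leq_S$-descending chains), which is the key structural fact I will use repeatedly.

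That $M$ is $S$-incomparable is immediate from the remark recorded just before the statement, where it is noted that $Minimals_{\leq_S}(X)$ is $S$-incomparable for any non-empty $X\subseteq S$; here we take $X=P$. The inclusion $M+S\subseteq P$ is equally direct: by definition $M\subseteq P$, and since $P$ is an ideal we get $M+S\subseteq P+S\subseteq P$.

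The substantive step is the reverse inclusion $P\subseteq M+S$, and this is where the well-foundedness of $\leq_S$ does the real work. Given $p\in P$, I would consider the non-empty set $\{q\in P\mid q\leq_S p\}$ and pick a $\leq_S$-minimal element $m$ of it, which exists by well-foundedness. One then checks that $m$ is in fact minimal in all of $P$: if some $m'\in P$ satisfied $m'\leq_S m$ with $m'\ne m$, then $m'\leq_S m\leq_S p$ would put $m'$ back in the set $\{q\in P\mid q\leq_S p\}$, contradicting the minimality of $m$ there. Hence $m\in M$, and since $m\leq_S p$ means $p-m\in S$, we obtain $p=m+(p-m)\in M+S$. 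This yields $P\subseteq M+S$, and combined with the previous paragraph gives $M+S=P$.

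Having established that $M$ is an $S$-incomparable set with $M+S=P$, the uniqueness clause of Theorem \ref{thrIdealX+S} forces $M=imsg_S(P)$, which is exactly the assertion $imsg_S(P)=Minimals_{\leq_S}(P)$. I expect the main obstacle to be the reverse inclusion $P\subseteq M+S$: it is tempting to treat the existence of a dominated minimal element as obvious, but it genuinely requires the well-foundedness of $\leq_S$ (equivalently, Dickson's lemma applied to the componentwise order on $\N^p$), and care is needed to argue that a minimal element of the truncated set $\{q\in P\mid q\leq_S p\}$ is actually minimal in the whole of $P$.
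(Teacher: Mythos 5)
Your proof is correct, but it follows a genuinely different route from the paper's. The paper's proof is a one-line, element-wise equivalence: for $x\in P$ it unfolds the definition of minimality ($x$ admits no decomposition $x=y+s$ with $y\in P\setminus\{x\}$ and $s\in S$) and identifies that condition with membership in $imsg_S(P)$, leaning implicitly on the \emph{existence} part of Theorem \ref{thrIdealX+S} (namely that $P=imsg_S(P)+S$, so a minimal $x$ must itself lie in $imsg_S(P)$, and conversely $S$-incomparability of $imsg_S(P)$ rules out proper decompositions of its elements). You instead verify that $M=Minimals_{\leq_S}(P)$ satisfies the two defining properties of the ideal minimal system of generators --- $S$-incomparability and $M+S=P$ --- and then invoke the \emph{uniqueness} clause of Theorem \ref{thrIdealX+S}. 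The substantive difference is where the inclusion $P\subseteq M+S$ comes from: you prove it from scratch via well-foundedness of $\leq_S$ (Dickson's lemma), which in effect re-proves the existence half of Theorem \ref{thrIdealX+S}, whereas the paper gets it for free by writing $x=x'+s$ with $x'\in imsg_S(P)$ and observing that minimality forces $s=0$. Consequently your closing claim that the reverse inclusion ``genuinely requires'' well-foundedness is slightly overstated --- it can be bypassed by citing $P=imsg_S(P)+S$ --- but your argument is self-contained where the paper's is not, and it makes explicit the chain condition on which the whole theory (including the unproved Theorem \ref{thrIdealX+S}) ultimately rests. One further remark in your favour: the paper's printed proof states the minimality condition as ``there does not exist $y\in P\setminus\{x\}$ such that $x\leq_S y$'', which under the paper's convention ($x\leq_S y$ iff $y-x\in S$) actually describes \emph{maximality}; the inequality should read $y\leq_S x$, exactly the form you use.
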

\begin{proof}
Let $x\in P$. Note that $x\in Minimals_{\leq_S}(P)$  if and only if there does not exist $y\in P\setminus\{x\}$ such that $x\leq_S y$, which is equivalent to $x\in imsg_S(P)$. 
\end{proof}

If $S$ is a numerical semigroup, and $P$ is an ideal of $S$, then $P\cup\{0\}$ is a numerical semigroup; the key of this result is that $\{x\in \N\mid x>\min (P)+\Fb(S)\}\subset P$. Nevertheless, the above statement extended to affine semigroups is not true.
For example, consider $S=\N^2$ and its ideal $P=\{(x,y)\in \N^2\mid x\neq 0\}$. Trivially, $P\cup\{0\}$ is not affine.  This raises the problem of determining when the ideal of a non-numerical affine semigroup provides an affine one.  The next lemma solves this question. Its proof can be obtained directly from \cite[Corollary 2.10]{Brunz}.

\begin{lemma}\label{cafin}
    Given an affine semigroup $S$, and $P$ an ideal of $S$, $P\cup\{0\}$ is an affine semigroup if and only if $\CaC_P$ is an affine cone.
\end{lemma}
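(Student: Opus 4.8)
The plan is to reduce the statement to a general criterion for a submonoid of $\N^p$ to be finitely generated, namely \cite[Corollary 2.10]{Brunz}, which asserts that a submonoid $T\subseteq\N^p$ is an affine semigroup if and only if $\CaC_T$ is an affine cone. Thus the whole argument consists of three moves: (i) checking that $P\cup\{0\}$ is genuinely a submonoid of $\N^p$, (ii) identifying its associated cone, and (iii) invoking the corollary.

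First I would verify that $P\cup\{0\}$ is a submonoid. Since $P$ is an ideal of $S$ and $P\subseteq S$, we have $P+P\subseteq P+S\subseteq P$, so $P\cup\{0\}$ is closed under addition; as $0$ is the identity and $P\cup\{0\}\subseteq\N^p$, it is a submonoid of $\N^p$. Next I would observe that adjoining the origin to a generating set never changes the generated cone, whence $\CaC_{P\cup\{0\}}=\CaC_P$. With these two facts in hand, \cite[Corollary 2.10]{Brunz} applied to $T=P\cup\{0\}$ yields both implications at once: $P\cup\{0\}$ is an affine semigroup precisely when $\CaC_{P\cup\{0\}}=\CaC_P$ is an affine cone.

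For completeness I would also record the easy implication independently. If $P\cup\{0\}$ is affine, a finite generating set of it generates $\CaC_{P\cup\{0\}}=\CaC_P$ as a cone, so $\CaC_P$ is finitely generated, i.e.\ an affine cone. The converse is the substantive half, and it is where the cited corollary does the real work: passing from a finitely generated (hence closed, polyhedral) cone back to finite generation of the monoid. The main obstacle is exactly this passage, and it is genuinely necessary that $\CaC_P$ be closed: the example $S=\N^2$, $P=\{(x,y)\in\N^2\mid x\neq 0\}$ shows that when the rational cone spanned by $P$ fails to be polyhedral, the monoid $P\cup\{0\}$ fails to be affine. I would therefore present the proof as the three-step reduction above, delegating the polyhedrality-to-finiteness step to \cite[Corollary 2.10]{Brunz}, and verifying only that $P\cup\{0\}$ satisfies the hypotheses of that corollary, namely that it is a submonoid contained in the pointed cone $\N^p$.
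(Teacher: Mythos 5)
Your proof takes essentially the same route as the paper: the paper's entire argument is that the lemma "can be obtained directly from \cite[Corollary 2.10]{Brunz}," which is exactly the reduction you perform. The only difference is that you spell out the routine verifications the paper leaves implicit — that $P\cup\{0\}$ is a submonoid of $\N^p$ and that $\CaC_{P\cup\{0\}}=\CaC_P$ — before invoking the same corollary for the substantive direction.
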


From this fact, we can prove the following result.

\begin{lemma}\label{affinePcth}
    Given an affine semigroup $S$, and $P$ an ideal of $S$, then $P\cup\{0\}$ is an affine semigroup and $\CaC_P=\CaC_S$ if and only if there exists $Y\subset P\setminus\{0\}$ such that $Y\cap \tau_i\ne \emptyset$ for all $i\in[t]$. 
\end{lemma}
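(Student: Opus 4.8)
The plan is to prove both implications with the convex geometry of $\CaC_S$ together with Lemma~\ref{cafin}. I write $\mathrm{cone}(B)=\{\sum_i\lambda_ib_i\mid \lambda_i\in\Q_{\ge0},\,b_i\in B\}$ for the rational cone generated by $B$, so that $\CaC_B=\mathrm{cone}(B)\cap\N^p$. Throughout I use two standard facts: since $S\subseteq\N^p$ the cone $\mathrm{cone}(S)$ is pointed and rational, hence equals the conical hull $\sum_{i=1}^t\tau_i$ of its extremal rays; and each $\tau_i$ is a one-dimensional face, meaning that if a point of $\tau_i$ is written as a positive rational combination of elements of $\mathrm{cone}(S)$, then every summand already lies on $\tau_i$.

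For the implication ($\Leftarrow$), suppose $Y\subset P\setminus\{0\}$ meets every $\tau_i$. Since $P\subseteq S$ we have $\mathrm{cone}(P)\subseteq\mathrm{cone}(S)$. Conversely, for each $i$ pick $y_i\in Y\cap\tau_i$; as $y_i\neq0$ lies on the ray $\tau_i$ we get $\tau_i=\R_{\ge0}y_i\subseteq\mathrm{cone}(P)$, and summing over $i$ yields $\mathrm{cone}(S)=\sum_{i=1}^t\tau_i\subseteq\mathrm{cone}(P)$. Hence $\mathrm{cone}(P)=\mathrm{cone}(S)$, and intersecting with $\N^p$ gives $\CaC_P=\CaC_S$. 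As $\CaC_S$ is an affine cone, so is $\CaC_P$, and Lemma~\ref{cafin} then shows that $P\cup\{0\}$ is an affine semigroup.

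For the implication ($\Rightarrow$), assume $P\cup\{0\}$ is affine and $\CaC_P=\CaC_S$. For each $i$ choose a nonzero lattice point $v_i\in\tau_i\cap\N^p$, which exists because $\tau_i$ is a rational ray. Then $v_i\in\CaC_S=\CaC_P\subseteq\mathrm{cone}(P)$, so I may write $v_i=\sum_j\lambda_jp_j$ with $\lambda_j>0$ and $p_j\in P$. Because $v_i$ lies on the extremal ray $\tau_i$ of $\mathrm{cone}(S)$ and $\mathrm{cone}(P)\subseteq\mathrm{cone}(S)$, the face property forces every $p_j$ onto $\tau_i$, and since $v_i\neq0$ at least one such $p_j$ is nonzero. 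Collecting one such element for each $i$ produces the required set $Y\subset P\setminus\{0\}$ with $Y\cap\tau_i\neq\emptyset$ for all $i\in[t]$.

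The inclusions of cones are routine; the step that needs the most care is the face argument in the forward direction, namely justifying that a positive combination representing a lattice point of the extremal ray $\tau_i$ can only involve elements of $P$ already lying on $\tau_i$. This rests on $\CaC_S$ being pointed and on $\tau_i$ being a genuine one-dimensional face of $\mathrm{cone}(S)$, and it is where I would be most careful to avoid implicitly assuming that $P$ is finitely generated.
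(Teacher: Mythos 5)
Your proof is correct and takes essentially the same route as the paper: the backward direction (ray containment forces $\CaC_P=\CaC_S$, then Lemma~\ref{cafin} gives affineness) is exactly the paper's argument, and your forward direction simply makes explicit the extremal-ray/face argument that the paper leaves implicit when it asserts $P\cap\tau_i\neq\emptyset$. The extra care you take there is welcome, since the paper's one-line justification (and its typo ``$y_i\notin P\cap\tau_i$'') glosses over precisely that point.
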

\begin{proof}
    If $P\cup\{0\}$ is an affine semigroup and $\CaC_P=\CaC_S$, then $P\cap \tau_i\ne \emptyset$ for all $i\in[t]$. It is enough to take $Y=\{y_1,\ldots, y_t\}$, with $y_i\notin P\cap \tau_i$. Conversely, if there exists $Y\subset P\setminus\{0\}$ such that, $Y\cap \tau_i\ne \emptyset$ for all $i\in[t]$, then $\CaC_P=\CaC_S$. By applying Lemma \ref{cafin}, $P\cup\{0\}$ is an affine semigroup.
\end{proof}

Recall that an affine semigroup $S$ is a $\CaC$-semigroup if the complement of $S$ in $\CaC_S$ is finite. Lemmas \ref{cafin} and \ref{affinePcth} can be refined to $\CaC$-semigroups. Given any subset $B\subset \N^p$, $\langle B\rangle$ denotes the set $\left\{ \sum _{i=1}^k \lambda_i b_i\mid k, \lambda_1,\ldots ,\lambda_k\in \N,\mbox{ and } b_1,\ldots ,b_k\in B\right\}$.

\begin{lemma}\label{lema_tecnico}
    Let $S$ be a $\CaC$-semigroup, and let $P$ be an ideal of $S$. If $P\cup\{0\}$ is a $\CaC$-semigroup, then $\CaC_P=\CaC_S$.
\end{lemma}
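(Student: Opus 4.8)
The plan is to argue at the level of the underlying real (rational polyhedral) cones rather than their integer points directly. Write $\mathrm{cone}(B)$ for the rational cone $\{\sum_i\lambda_ib_i \mid \lambda_i\in\Q_{\ge 0},\ b_i\in B\}$ generated by a set $B$, so that $\CaC_B=\mathrm{cone}(B)\cap\N^p$; since the integer points of a rational cone determine it, it suffices to prove $\mathrm{cone}(P)=\mathrm{cone}(S)$. The inclusion $\mathrm{cone}(P)\subseteq\mathrm{cone}(S)$ is immediate from $P\subseteq S$, so the entire content lies in the reverse inclusion $\mathrm{cone}(S)\subseteq\mathrm{cone}(P)$, which I would establish by showing that each extremal ray $\tau_i$ of $\CaC_S$ lies in $\mathrm{cone}(P)$.

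The crucial structural input is the hypothesis that $P\cup\{0\}$ is a $\CaC$-semigroup: in particular it is affine, so Lemma \ref{cafin} makes $\CaC_P$ an affine cone, and hence $\mathrm{cone}(P)$ a finitely generated, and therefore closed, rational polyhedral cone. I would fix a halfspace description once and for all, writing $\mathrm{cone}(P)=\{z\in\R^p \mid \langle z,u_j\rangle\ge 0,\ j\in[m]\}$ for suitable facet normals $u_1,\dots,u_m\in\Q^p$. (If $P=\emptyset$ or $0\in P$ the statement is trivial, the latter because $0\in P$ forces $P=S$; so I may assume there is a nonzero $x\in P$.)

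Next I would feed the ideal property into this halfspace description. Because $n_i=\mult_i(S)\in\tau_i\cap S$ and $x\in P$, the ideal condition $P+S\subseteq P$ yields $x+kn_i\in P\subseteq\mathrm{cone}(P)$ for every $k\in\N$ and every $i\in[t]$. Evaluating a facet inequality gives $\langle x,u_j\rangle+k\langle n_i,u_j\rangle\ge 0$ for all $k$; were $\langle n_i,u_j\rangle<0$, the left-hand side would tend to $-\infty$, a contradiction. Hence $\langle n_i,u_j\rangle\ge 0$ for every $j$, i.e.\ $n_i\in\mathrm{cone}(P)$. Since the cone $\mathrm{cone}(S)$ is pointed and finitely generated, it equals the conical hull of its extremal rays $\tau_1,\dots,\tau_t$, and $n_i\in\tau_i$; therefore $\tau_i\subseteq\mathrm{cone}(P)$ for all $i$ forces $\mathrm{cone}(S)\subseteq\mathrm{cone}(P)$. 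Intersecting the resulting equality $\mathrm{cone}(P)=\mathrm{cone}(S)$ with $\N^p$ gives $\CaC_P=\CaC_S$.

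The step I expect to be the crux is the second one: the whole argument hinges on $\mathrm{cone}(P)$ being \emph{closed} (equivalently, finitely generated and polyhedral), which is exactly what can fail for an ideal of a non-numerical affine semigroup. The example $S=\N^2$, $P=\{(x,y)\mid x\ne 0\}$ from the text shows this, since there the translates $x+kn_i$ approach a boundary ray that never lies in $\mathrm{cone}(P)$. It is precisely the $\CaC$-semigroup hypothesis, through Lemma \ref{cafin}, that rules this out and makes the limiting argument on the facet inequalities valid.
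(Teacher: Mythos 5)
Your proof is correct, and it takes a genuinely different route from the paper's. The two arguments share their starting point---Lemma \ref{cafin} converts the hypothesis into finite generation of $\CaC_P$---and their engine, namely that a fixed $x\in P$ can be translated by arbitrarily large multiples of elements of $S$ without leaving $P$. But the paper then argues by contradiction entirely inside $\N^p$: it picks $\tau\in\CaC_S\setminus\CaC_P$, writes $x+k\tau$ and $x$ in a finite monoid generating set $\{\alpha_1,\ldots,\alpha_r\}$ of $\CaC_P$, and subtracts coefficients to conclude $k\tau\in\CaC_P$. That route uses that $S$ is a $\CaC$-semigroup (to guarantee $k\tau\in S$, hence $x+k\tau\in P$, for all large $k$; the paper's phrase ``for every $k\in\N$'' silently skips the gaps of $S$), and its pivotal step---``by choosing $k$ large enough such that $\lambda_i>\gamma_i$ for all $i$''---is asserted rather than proved: a representation of $x+k\tau$ with $\lambda_i\ge\gamma_i$ for all $i$ exists if and only if $k\tau\in\CaC_P$, so this step amounts to the very conclusion being drawn. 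Your convex-geometric mechanism (halfspace description of the finitely generated cone, the facet-inequality limit giving $n_i\in\mathrm{cone}(P)$, and generation of a pointed polyhedral cone by its extremal rays) is airtight precisely at that point, so it can be read as a rigorous repair of the paper's argument. It also buys generality: you never use that $S$ or $P\cup\{0\}$ is a $\CaC$-semigroup, only that both are affine, so your argument proves the stronger statement that $\CaC_P=\CaC_S$ for every non-empty ideal $P$ of an affine semigroup $S$ with $P\cup\{0\}$ affine; what the paper's approach buys, conversely, is that (once patched) it stays within elementary monoid arithmetic in $\N^p$, with no appeal to real cones or Weyl--Minkowski duality. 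Two small remarks: you could avoid Minkowski's theorem altogether by running the same facet-inequality argument on every minimal generator of $S$ rather than only on the $n_i$; and your parenthetical that the case $P=\emptyset$ is trivial is not quite right ($\CaC_\emptyset=\{0\}$, so the conclusion actually fails for the empty ideal)---like the paper, you should simply adopt the standard convention that ideals are non-empty, which your assumption of a nonzero $x\in P$ effectively does.
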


\begin{proof}
    Let $P$ be an ideal of $S$, then $\CaC_P\subseteq\CaC_S$. If $\CaC_P\neq\CaC_S$, then there exists $\tau\in\CaC_S$ such that $\tau\notin\CaC_P=\langle \alpha_1,\ldots,\alpha_r\rangle$. Consider $x\in P$, since $P$ is an ideal of $S$, we obtain that for every $k\in \N$, $x+k\tau=\sum_{i=1}^r\lambda_i\alpha_i$, for some $\lambda_1,\ldots,\lambda_r\in \N$.
    Taking into account $x= \sum_{i=1}^r\gamma_i\alpha_i$ for some $\gamma_1,\ldots,\gamma_r\in \N$, and by choosing $k$ large enough such that $\lambda_i>\gamma_i$ for all $i\in [r]$, it follows that $k\tau=\sum_{i=1}^r(\lambda_i-\gamma_i)\alpha_i$, contradicting that $\tau\notin\CaC_P$.
\end{proof}

\begin{proposition}\label{Pideal<->PC-smgp}
    Let $S$ be a $\CaC$-semigroup, and let $P$ be an ideal of $S$. Then, $P\cup \{0\}$ is a $\CaC$-semigroup if and only if there exists a finite set $Y\subset P\setminus\{0\}$ such that $Y\cap \tau_i\ne \emptyset$, for all $i\in[t]$.
\end{proposition}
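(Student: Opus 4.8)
The statement is a biconditional, and I will treat the two implications separately, the reverse one carrying essentially all of the work.

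For the direct implication, suppose $P\cup\{0\}$ is a $\CaC$-semigroup. Being a $\CaC$-semigroup it is in particular affine, and Lemma \ref{lema_tecnico} gives $\CaC_P=\CaC_S$. Since the extremal rays of $\CaC_S$ are $\tau_1,\ldots,\tau_t$ and $\CaC_P$ is an affine cone generated by elements of $P$, each extremal ray must be spanned by a point of $P$; that is, $P\cap\tau_i\neq\emptyset$ for every $i\in[t]$. Choosing one nonzero $y_i\in P\cap\tau_i$ and setting $Y=\{y_1,\ldots,y_t\}$ produces the required finite subset of $P\setminus\{0\}$. Equivalently, this is exactly the conclusion obtained by feeding the equality $\CaC_P=\CaC_S$ into Lemma \ref{affinePcth}, whose proof already furnishes a $t$-element $Y$.

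For the converse, assume such a finite $Y$ exists. By Lemma \ref{affinePcth}, $P\cup\{0\}$ is already an affine semigroup with $\CaC_{P\cup\{0\}}=\CaC_P=\CaC_S$, so it only remains to verify cofiniteness, i.e.\ that $\CaC_S\setminus(P\cup\{0\})$ is finite. Writing
\[
\CaC_S\setminus(P\cup\{0\})=(\CaC_S\setminus S)\cup\bigl(S\setminus(P\cup\{0\})\bigr),
\]
the first set is finite because $S$ is a $\CaC$-semigroup, so the whole problem reduces to showing that $S\setminus P$ is finite. Here I would exploit that $P$ is an ideal: after discarding all but one point $y_i\in Y\cap\tau_i$ per ray we still have $\{y_1,\ldots,y_t\}\subseteq P$, hence $\{y_1,\ldots,y_t\}+S\subseteq P+S\subseteq P$, and therefore $S\setminus P\subseteq S\setminus\bigcup_i(y_i+S)=\bigcap_{i=1}^{t}\Ap(S,y_i)$. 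Thus it suffices to prove that this intersection of Apery sets is finite.

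This last finiteness is the crux of the argument, and the step I expect to be the main obstacle. An element $s$ lies in $\bigcap_i\Ap(S,y_i)$ exactly when $s-y_i\notin S$ for all $i$, i.e.\ for each $i$ either $s-y_i\notin\CaC_S$ or $s-y_i$ is one of the finitely many gaps of $S$. To bound such $s$ I would use that $y_i$ sits on the extremal ray $\tau_i$: writing $y_i=\mu_i v_i$ for the primitive generator $v_i$ of $\tau_i$, any facet inequality $\langle a_j,\,\cdot\,\rangle\ge 0$ defining $\CaC_S$ satisfies $\langle a_j,y_i\rangle=0$ whenever the facet contains $\tau_i$. Hence if $s$ ranged over an unbounded subset of $\CaC_S$, passing to a limiting direction $u=\lim s/|s|$ and expanding $u$ as a nonnegative combination of the ray generators with, say, a positive coefficient on $v_1$, one checks that for $s$ large every facet inequality holds for $s-y_1$, so $s-y_1\in\CaC_S$, while $s-y_1$ stays far from the bounded gap set and thus lies in $S$, contradicting $s\in\Ap(S,y_1)$. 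Consequently $\bigcap_i\Ap(S,y_i)$ is bounded, hence finite, and the reverse implication closes. For a simplicial $S$ this compactness estimate collapses to a direct computation: in ray coordinates $s=\sum_i\lambda_i v_i$ the requirement $s-y_i\notin S$ forces $\lambda_i\le\mu_i+B_i$, where $B_i$ is the largest $i$-th ray coordinate occurring among the gaps, so each $\lambda_i$ is capped and $s$ is bounded.
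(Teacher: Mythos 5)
Your proof is correct, and its forward implication is exactly the paper's: both deduce $\CaC_P=\CaC_S$ from Lemma \ref{lema_tecnico} and pick one nonzero point of $P$ on each extremal ray. The converse also starts the same way (via Lemma \ref{affinePcth}), but you settle the crucial finiteness step by a genuinely different route. The paper stays combinatorial: writing $\CaC_S=\langle b_1,\ldots,b_m\rangle$ as a finitely generated monoid, it chooses for each $i$ a positive integer $k_i$ with $k_ib_i\in\langle Y\rangle$, so that every $x=\sum_{i=1}^m\lambda_ib_i$ with some $\lambda_i\geq k_i$ lies in $Y+\CaC_S$; hence $\CaC_S\setminus(Y+\CaC_S)$ is contained in the finite set of combinations with all $\lambda_i\leq k_i$, and since $\CaC_S=S\sqcup\CaH(S)$ with $\CaH(S)$ finite, $\CaH(P)$ is finite. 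You instead bound $S\setminus P$ by $\bigcap_{i\in[t]}\Ap(S,y_i)$ and prove this intersection of Apery sets finite by a limiting-direction compactness argument: along an unbounded sequence in the intersection, the facet inequalities whose facets contain $\tau_1$ vanish on $y_1$, while the remaining ones grow linearly in the limit direction, so eventually $s-y_1$ satisfies all facet inequalities, is an integer point of the real cone (hence lies in $\CaC_S$), and is outside the bounded gap set, contradicting $s\in\Ap(S,y_1)$. I checked that this sketch closes (relabelling so the limit direction has positive coefficient on $v_1$, and the passage from facet inequalities to membership in $\CaC_S$ for integer points, both work), so your ``one checks'' is legitimate. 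The trade-off: the paper's argument is elementary and effective---it yields an explicit finite superset of $\CaH(P)$ through the bounds $k_i$, in the same constructive spirit as the set $\Gamma$ and Lemma \ref{GammaLemma} used later for computation---whereas yours is non-constructive but proves en route that $\bigcap_{i\in[t]}\Ap(S,y_i)$ is finite, a fact the paper asserts without proof at the beginning of Section \ref{MEDSection}; your closing observation that this collapses to a one-line coordinate bound in the simplicial case is also correct.
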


\begin{proof}
In view of Lemma \ref{lema_tecnico}, if $P\cup \{0\}$ is a $\CaC$-semigroup, then there exists $Y\subset P\setminus\{0\}$ such that $Y\cap \tau_i\ne \emptyset$, for any $i\in[t]$.
Conversely, assume that there exists a finite subset $Y$ of $P\setminus\{0\}$ such that, $Y\cap \tau_i\ne \emptyset$ for all $i\in[t]$. By applying Lemma \ref{affinePcth}, $\CaC_P=\CaC_S$, and $P\cup \{0\}$ is an affine semigroup. We point out that $Y+S\subseteq P$. To prove that $\CaH(P)$ is finite, and taking into account that $\CaC_S=S\sqcup\CaH(S)$ where $\CaH(S)$ is a finite subset, it suffices to show that the cardinality of $\CaC_S\setminus\left(Y+\CaC_S\right)$ is finite ($\sharp(B)$ denotes the cardinality of any set $B$). Let $\CaC_S=\langle b_1,\ldots ,b_m\rangle$, then, for each $i\in[m]$, there exists a positive integer $k_i$ such that $k_ib_i\in \langle Y\rangle$. Since $x\in \CaC_S$ if and only if $x=\sum_{i=1}^{m}\lambda_ib_i$, thus, $\sharp \left(\CaC_S\setminus\big(Y+\CaC_S\big)\right)=\sharp \{\sum_{i=1}^{m}\lambda_ib_i\mid \lambda_i\leq k_i \text{ for all } i\in[m]\}< \infty$. Hence, $P\cup \{0\}$ is a $\CaC$-semigroup.
\end{proof}

\section{$I(S)$-semigroups and its associated tree}\label{treesection}

Let $S$ be an affine semigroup. Recall that an affine semigroup $T$ is an $I(S)$-semigroup of $S$ if $T\setminus\{0\}$ is an ideal of $S$. We can easily rewrite Proposition \ref{Pideal<->PC-smgp}  from this definition. 

\begin{corollary}\label{I(S)C-smp}
   Let $S$ be a $\CaC$-semigroup, and let $T$ be an $I(S)$-semigroup. Then, $T$ is a $\CaC$-semigroup if and only if there exists $Y\subset T\setminus\{0\}$ such that $Y\cap \tau_i\ne \emptyset$, for all $i\in[t]$.
\end{corollary}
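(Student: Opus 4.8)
The plan is to reduce the statement directly to Proposition \ref{Pideal<->PC-smgp} by translating the language of $I(S)$-semigroups into that of ideals. Since $T$ is an $I(S)$-semigroup, the set $P := T\setminus\{0\}$ is by definition an ideal of $S$, and $T = P\cup\{0\}$. Moreover $P\setminus\{0\} = T\setminus\{0\}$, so the two phrasings of the ray condition coincide. Thus it suffices to invoke the proposition with this particular $P$.

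Concretely, first I would observe that Proposition \ref{Pideal<->PC-smgp} asserts that $P\cup\{0\} = T$ is a $\CaC$-semigroup if and only if there exists a \emph{finite} set $Y\subset P\setminus\{0\} = T\setminus\{0\}$ meeting each extremal ray $\tau_i$. This already yields one implication for free: if $T$ is a $\CaC$-semigroup, the proposition supplies such a $Y$, which is in particular a subset of $T\setminus\{0\}$ satisfying the required ray condition.

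The only point requiring a brief remark is the discrepancy between the word ``finite'' in the proposition and its absence in the corollary. For the converse direction, suppose merely that there exists some $Y\subset T\setminus\{0\}$ (a priori possibly infinite) with $Y\cap\tau_i\ne\emptyset$ for all $i\in[t]$. Since there are only finitely many extremal rays, I would select one element $y_i\in Y\cap\tau_i$ for each $i\in[t]$ and set $Y' = \{y_1,\ldots,y_t\}$; this is a finite subset of $T\setminus\{0\} = P\setminus\{0\}$ that still meets every ray. Applying the proposition to $Y'$ then gives that $T$ is a $\CaC$-semigroup. I expect no genuine obstacle here: the mathematical content is entirely contained in Proposition \ref{Pideal<->PC-smgp}, and the extraction of a finite subset from the (finite) collection of extremal rays is the only step beyond a direct citation.
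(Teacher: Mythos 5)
Your proposal is correct and matches the paper's approach: the paper treats this corollary as an immediate restatement of Proposition \ref{Pideal<->PC-smgp} obtained by taking $P = T\setminus\{0\}$, exactly as you do. Your extra remark on extracting a finite $Y'$ from a possibly infinite $Y$ (one element per extremal ray) is a sensible clarification of the finiteness discrepancy, but it does not change the substance of the argument.
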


As an immediate consequence of Corollary \ref{I(S)C-smp}, the analysis of $I(S)$-semigroups is equivalent to the study of ideals of $S$ meeting the conditions outlined in Proposition \ref{Pideal<->PC-smgp}. Furthermore, as observed in numerical semigroups (see \cite{ideal R-M}), given a $\CaC$-semigroup $S$, $T$ is an $I(S)$-semigroup if and only if $T\subseteq S \subseteq T\cup PF(T)$, where $PF(S)$ is the set of pseudo-Frobenius elements defined as $\{x\in \CaH(S)\mid x+(S\setminus\{0\})\subset S\}$. In the context of $I(S)$-semigroups, we interpret Lemma \ref{imsg} as follows.

\begin{lemma}
    Let $S$ be an affine semigroup and let $T$ be an $I(S)$-semigroup. Then, $imsg_S(T\setminus\{0\})$ is finite. Moreover, $imsg_S(T\setminus\{0\})=Minimals_{\leq_S}(msg(T))$.
\end{lemma}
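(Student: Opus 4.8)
The plan is to deduce the statement from Lemma~\ref{imsg}, which already gives $imsg_S(T\setminus\{0\})=Minimals_{\leq_S}(T\setminus\{0\})$, since $T\setminus\{0\}$ is an ideal of $S$. Thus both assertions reduce to understanding the set $Minimals_{\leq_S}(T\setminus\{0\})$: I must show it is finite and that it coincides with $Minimals_{\leq_S}(msg(T))$. The whole argument will rest on a single observation relating the generators of $T$ as an affine semigroup to the order $\leq_S$.

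That key observation is: every nonzero element of $T$ lies $\leq_S$-above some minimal generator of $T$. Indeed, since $T$ is affine it is generated by the finite set $msg(T)=\{g_1,\dots,g_n\}$, so any $x\in T\setminus\{0\}$ can be written $x=\sum_i\lambda_i g_i$ with some $\lambda_j\geq 1$; then $x-g_j=(\lambda_j-1)g_j+\sum_{i\neq j}\lambda_i g_i\in T$. Because $T\setminus\{0\}$ is an ideal of $S$ we have $T\subseteq S$, hence $x-g_j\in S$, i.e.\ $g_j\leq_S x$ with $g_j\in T\setminus\{0\}$. Consequently, if $x$ is $\leq_S$-minimal in $T\setminus\{0\}$ then minimality forces $x=g_j\in msg(T)$. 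This yields $Minimals_{\leq_S}(T\setminus\{0\})\subseteq msg(T)$, and since $msg(T)$ is finite, the finiteness of $imsg_S(T\setminus\{0\})$ follows at once.

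For the equality I would invoke the elementary poset fact that if $G\subseteq P$ and every element of $P$ dominates from below some element of $G$, then the minimal elements of $P$ and of $G$ agree; here $P=T\setminus\{0\}$, $G=msg(T)$, and the domination property is exactly the observation above. Concretely, a $\leq_S$-minimal element of $T\setminus\{0\}$ already lies in $msg(T)$ and stays minimal there, and conversely a $\leq_S$-minimal element $m$ of $msg(T)$ cannot have some $y\in T\setminus\{0\}$ strictly $\leq_S$-below it, for then the generator that $y$ dominates would be a strictly $\leq_S$-smaller element of $msg(T)$, using antisymmetry of $\leq_S$ (which holds because $S\subseteq\N^p$, so a vector and its negative both lying in $S$ must vanish). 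The only point to watch carefully is the distinction between the two orders: $\leq_S$ extends $\leq_T$ (every difference in $T$ is a difference in $S$), so two minimal generators of $T$, though incomparable for $\leq_T$, may become $\leq_S$-comparable; this is precisely why one must take $Minimals_{\leq_S}$ of $msg(T)$ rather than $msg(T)$ itself, and it is the one place where the argument could go wrong.
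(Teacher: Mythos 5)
Your proof is correct, and its core mechanism differs from the paper's. Both arguments begin identically, using Lemma \ref{imsg} to reduce everything to the equality $Minimals_{\leq_S}(T\setminus\{0\})=Minimals_{\leq_S}(msg(T))$; the difference lies in how the nontrivial inclusion $Minimals_{\leq_S}(msg(T))\subseteq Minimals_{\leq_S}(T\setminus\{0\})$ is established. The paper argues by contraposition and case analysis: if $x=y+s$ with $y\in T\setminus\{0\}$, $s\in S$ is not $\leq_S$-minimal in $T\setminus\{0\}$ and $x\in msg(T)$, then the ideal property of $T\setminus\{0\}$ forces the witness $y$ to lie in $msg(T)$ (otherwise $y=u+v$ yields $x=u+(v+s)$ with $v+s\in T\setminus\{0\}$, contradicting minimal generation), so $x$ fails to be $\leq_S$-minimal in $msg(T)$ as well. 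You instead isolate a domination lemma --- every element of $T\setminus\{0\}$ lies $\leq_S$-above some element of $msg(T)$, which needs only $T\subseteq S$ and finite generation of $T$ --- and then finiteness and both inclusions follow by a purely order-theoretic argument; your explicit appeal to antisymmetry of $\leq_S$ (valid since $S\subseteq\N^p$) is exactly what keeps the inequalities strict in the contradiction step. Your route buys generality and economy: the ideal hypothesis enters only through Lemma \ref{imsg}, so your argument actually proves $Minimals_{\leq_S}(T\setminus\{0\})=Minimals_{\leq_S}(msg(T))$ for an arbitrary finitely generated submonoid $T$ of $S$. The paper's route keeps the ideal property at the center, matching the proof pattern it imports from the numerical-semigroup setting, and it is terser because it declares that one inclusion ``suffices,'' leaving implicit the easy fact (which your domination observation supplies) that $\leq_S$-minimal elements of $T\setminus\{0\}$ must be minimal generators of $T$.
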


\begin{proof}
    By applying Lemma \ref{imsg} it is suffices to prove $ Minimals_{\leq_S}(msg(T))\subseteq Minimals_{\leq_S}(T\setminus\{0\})$. Let $x\in T\setminus\{0\}$ such that $x=y+s$, for some $s\in S$ and $y\in T\setminus\{0\}$. If $x\notin msg(T)$, then clearly $x\notin Minimals_{\leq_S}(msg(T))$. If $x\in msg(T)$, then $s\in S\setminus T$ and necessarily $y\in msg(T)$ and thus $x\notin Minimals_{\leq_S}(msg(T))$.
\end{proof}

This section shows how the set of all $I(S)$-semigroups can be arranged in a tree, drawing inspiration from \cite{ideal R-M}. From now on, we fixed a monomial order $\preceq$ on $\N ^p$, and let $\mathcal{J(S)}=\{T\mid T \text{ is an $I(S)$-semigroup}\}$.

The following results are essential to obtain the announcement tree, and it has a straightforward proof (see \cite[Lemma 27]{ideal R-M}). Given $A$ and $B$ two subsets of $\N^p$ such that $A \setminus B$ is finite, $\mathcal{O}_A(B)$ is defined as $\max_\preceq(A\setminus B)$.

\begin{lemma}\label{uptree}
    Let $S$ be a $\CaC$-semigroup and $T$ be a non-proper $I(S)$-semigroup. Then, $T \cup \{\mathcal{O}_S(T)\}\in \mathcal{J(S)}$. 
\end{lemma}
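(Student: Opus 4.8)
The plan is to set $f:=\mathcal{O}_S(T)=\max_\preceq(S\setminus T)$ and show directly that $T':=T\cup\{f\}$ is an affine semigroup whose nonzero part is an ideal of $S$, i.e.\ that $T'\in\mathcal{J}(S)$. First I would record the basic features of $f$. Since $T$ is non-proper we have $T\neq S$, so $S\setminus T\neq\emptyset$; as this set is finite (which is exactly what makes $\mathcal{O}_S(T)$ defined), the $\preceq$-maximum $f$ exists, and $f\in S$, $f\notin T$, with $f\neq 0$ because $0\in T$. Write $P:=T\setminus\{0\}$, which by hypothesis is an ideal of $S$.

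The heart of the argument is a single order-theoretic claim: for every $s\in S\setminus\{0\}$ one has $f+s\in T\setminus\{0\}$. To see this, note $f+s\in S$ since $S$ is a semigroup, and since $\preceq$ is a monomial order and $s\neq 0$ we get $0\prec s$, whence $f=f+0\prec f+s$ by compatibility with addition. Because $f$ is the $\preceq$-largest element of $S\setminus T$ while $f+s\in S$ satisfies $f+s\succ f$, the element $f+s$ cannot lie in $S\setminus T$; hence $f+s\in T$, and $f+s\neq 0$ gives $f+s\in P$. This is the step I expect to be the crux: everything else is formal, whereas here the maximality of $f$ together with the additive compatibility of the monomial order does the real work. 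In particular it is what forces $2f\in T$, the only genuinely non-obvious closure instance.

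Granting the claim, closure of $T'$ under addition is immediate: sums of two elements of $T$ remain in $T$; for $t\in T$ either $t=0$, giving $f+t=f\in T'$, or $t\in S\setminus\{0\}$ and the claim yields $f+t\in T\subseteq T'$; and $f+f\in T$ by the claim applied to $s=f$. Since $0\in T'$, this makes $T'$ a submonoid of $\N^p$. It is moreover affine, since $T'=\langle msg(T)\cup\{f\}\rangle$ is generated by a finite set: the right-hand side is a submonoid containing $\langle msg(T)\rangle=T$ and $f$, hence contains $T'$, and the reverse inclusion holds because $T'$ is a submonoid containing $msg(T)\cup\{f\}$.

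Finally I would verify that $T'\setminus\{0\}=P\cup\{f\}$ is an ideal of $S$. For $p\in P$ and $s\in S$ we have $p+s\in P$ because $P$ is an ideal; for $f$ and $s\in S$, either $s=0$ and $f+s=f\in P\cup\{f\}$, or $s\in S\setminus\{0\}$ and the claim gives $f+s\in P$. Hence $(P\cup\{f\})+S\subseteq P\cup\{f\}$. Combining the three parts, $T'$ is an affine semigroup with $T'\setminus\{0\}$ an ideal of $S$, so $T'\in\mathcal{J}(S)$, as required.
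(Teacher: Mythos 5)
Your proof is correct and follows exactly the route the paper has in mind: the paper does not spell out an argument but defers to the ``straightforward proof'' of the numerical case (Lemma 27 of \cite{ideal R-M}), whose core is precisely your key claim that for $s\in S\setminus\{0\}$ the compatibility of the monomial order with addition gives $f+s\succ f$, so maximality of $f=\mathcal{O}_S(T)$ forces $f+s\in T\setminus\{0\}$, from which closure, finite generation, and the ideal property of $T\cup\{f\}$ all follow. Your explicit handling of the remaining formalities (affineness via $msg(T)\cup\{f\}$ and the ideal check) is a faithful expansion of that same argument, so there is nothing to correct.
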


We define $G(\mathcal{J(S)})$, the associated graph to $\mathcal{J(S)}$, in the
following way: the set of vertices of $G(\mathcal{J(S)})$ is $\mathcal{J(S)}$ and $(T_1,T_2) \in \mathcal{J(S)} \times \mathcal{J(S)}$ is an edge if $T_2 = T_1 \cup \{\mathcal{O}_S(T_1)\}$. When $(T_1, T_2)$ is an edge, we say that $T_1$ is a child of $T_2$. 

From Lemma 8 in \cite{AffineConBody}, we deduce that given an affine semigroup $S$ and an element $x$ of $S$, then $S\setminus\{x\}$ is an affine semigroup if and only if $x\in msg(S)$. This characterisation can be translated to $I(S)$-semigroup as follows.

\begin{lemma}\label{downtree}
    Let $S$ be an affine semigroup, $T$ be an $I(S)$-semigroup, and $x\in msg(T)$. Then $T\setminus\{x\}$ is an $I(S)$-semigroup if and only if $x\in imsg_S(T\setminus\{0\})$.
\end{lemma}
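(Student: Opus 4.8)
The plan is to reduce the statement to a single condition on the ideal $T\setminus\{0\}$ and then unwind the definition of $imsg_S$. First I would observe that, since $x\in msg(T)$, we have $x\neq 0$; hence $0\in T\setminus\{x\}$ and $(T\setminus\{x\})\setminus\{0\}=(T\setminus\{0\})\setminus\{x\}$. Writing $P=T\setminus\{0\}$, being an $I(S)$-semigroup amounts to two requirements on $T\setminus\{x\}$: that it is an affine semigroup, and that its set of nonzero elements $P\setminus\{x\}$ is an ideal of $S$.

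The first requirement comes for free. By the characterisation quoted just before the statement (the translation of Lemma 8 of \cite{AffineConBody}), for the affine semigroup $T$ the set $T\setminus\{x\}$ is again an affine semigroup precisely because $x\in msg(T)$. Thus the whole content of the lemma collapses to proving that $P\setminus\{x\}$ is an ideal of $S$ if and only if $x\in imsg_S(P)$.

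For the remaining equivalence I would argue as follows. Since $P+S\subseteq P$, we always have $(P\setminus\{x\})+S\subseteq P$, so $P\setminus\{x\}$ fails to be an ideal exactly when the removed element reappears, that is, when $x\in (P\setminus\{x\})+S$. Hence $P\setminus\{x\}$ is an ideal if and only if $x\notin (P\setminus\{x\})+S$. Now I would translate the right-hand side via Lemma \ref{imsg}, which gives $imsg_S(P)=Minimals_{\leq_S}(P)$: writing $x=y+s$ with $y\in P\setminus\{x\}$ and $s\in S$, and noting $s\neq 0$ (otherwise $y=x$), the membership $x\in (P\setminus\{x\})+S$ is the same as the existence of $y\in P\setminus\{x\}$ with $x-y\in S$, which is precisely the failure of $x$ to be a minimal element of $P$ for $\leq_S$. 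Therefore $x\notin (P\setminus\{x\})+S$ if and only if $x\in Minimals_{\leq_S}(P)=imsg_S(P)$, completing the chain of equivalences.

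I do not expect a genuine obstacle; the argument is a careful unwinding of definitions. The only points requiring attention are the bookkeeping that removing $x\neq 0$ leaves the monoid structure intact (handled by the cited affineness criterion, which decouples the ``semigroup'' part from the ``ideal'' part) and the small case check that the witness $s$ in $x=y+s$ is nonzero, so that membership in $(P\setminus\{x\})+S$ coincides exactly with non-minimality in the order $\leq_S$.
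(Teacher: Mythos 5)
Your proof is correct and is essentially the argument the paper has in mind: the paper's own proof is just a one-line citation (``argue as in \cite[Lemma 33]{ideal R-M}''), and your write-up supplies exactly that argument transported to the affine setting --- affineness of $T\setminus\{x\}$ via the quoted criterion from \cite{AffineConBody}, then reduction of the ideal condition to $x\notin (P\setminus\{x\})+S$, and finally the translation to minimality through Lemma \ref{imsg}. No gaps; your version is in fact more self-contained than the paper's.
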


\begin{proof}
This is followed by arguing as in \cite[Lemma 33]{ideal R-M}.    
\end{proof}

\begin{theorem}\label{thrmTree}
    For any  $\CaC$-semigroup $S$, $G(\mathcal{J(S)})$ is a tree with root $S$. Furthermore, the set of children of any $T\in \mathcal{J(S)}$ is the set 
    \[\{T\setminus\{x\}\mid x\in imsg_S(T\setminus\{0\}) \text{ and } x\succ \mathcal{O}_S(T)\}.\]
\end{theorem}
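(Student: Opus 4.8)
The plan is to establish the tree structure first and then read off the children. Throughout I use that for every $T\in\mathcal{J(S)}$ the set $S\setminus T$ is finite: the relation $T\subseteq S\subseteq T\cup PF(T)$ recalled above gives $S\setminus T\subseteq PF(T)$, and $PF(T)$ is finite. Consequently $\mathcal{O}_S(T)=\max_\preceq(S\setminus T)$ is well-defined whenever $T\neq S$, while $S\setminus S=\emptyset$ so $\mathcal{O}_S(S)$ does not exist. By the defining rule of $G(\mathcal{J(S)})$, each vertex $T\neq S$ has the single outgoing edge $(T,\,T\cup\{\mathcal{O}_S(T)\})$, whose head lies in $\mathcal{J(S)}$ by Lemma \ref{uptree}; thus every non-$S$ vertex has a unique parent, and $S$ has none.

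To see that $G(\mathcal{J(S)})$ is a tree rooted at $S$, I would iterate the parent map. Writing $T^{(0)}=T$ and $T^{(k+1)}=T^{(k)}\cup\{\mathcal{O}_S(T^{(k)})\}$, each step adjoins exactly one element of $S$, so $\sharp\!\left(S\setminus T^{(k+1)}\right)=\sharp\!\left(S\setminus T^{(k)}\right)-1$. Hence the strictly decreasing sequence of non-negative integers $\sharp\!\left(S\setminus T^{(k)}\right)$ reaches $0$ after finitely many steps, at which point $T^{(k)}=S$. This simultaneously rules out directed cycles (the quantity $\sharp(S\setminus\,\cdot\,)$ is strictly monotone along edges) and shows that every vertex is joined to $S$ by a unique ascending path; together with uniqueness of parents, this is exactly the assertion that $G(\mathcal{J(S)})$ is a tree with root $S$.

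For the children of a fixed $T\in\mathcal{J(S)}$, suppose $T_1$ is a child, i.e.\ $T=T_1\cup\{\mathcal{O}_S(T_1)\}$. Then $T_1=T\setminus\{x\}$ with $x=\mathcal{O}_S(T_1)\in T$. Since deleting an element that is not a minimal generator destroys the semigroup property, necessarily $x\in msg(T)$, and then Lemma \ref{downtree} forces $x\in imsg_S(T\setminus\{0\})$. Conversely, for any $x\in imsg_S(T\setminus\{0\})$ we have $x\in msg(T)$, so $T\setminus\{x\}\in\mathcal{J(S)}$ by Lemma \ref{downtree}. It then remains to decide when such a deletion yields a child, i.e.\ when $\mathcal{O}_S(T\setminus\{x\})=x$. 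Because $x\in T\subseteq S$, we have $S\setminus(T\setminus\{x\})=(S\setminus T)\cup\{x\}$, whence $\mathcal{O}_S(T\setminus\{x\})=\max_\preceq\big((S\setminus T)\cup\{x\}\big)$. As $x\notin S\setminus T$ and $\preceq$ is a total order, this maximum equals $x$ precisely when $x\succ\mathcal{O}_S(T)$; in the case $T=S$ the set $S\setminus T$ is empty and the condition is read as vacuously true, consistent with $S\setminus\{x\}$ being a child of $S$ for every $x\in imsg_S(S\setminus\{0\})$. Combining the two directions yields the claimed child set $\{T\setminus\{x\}\mid x\in imsg_S(T\setminus\{0\})\text{ and }x\succ\mathcal{O}_S(T)\}$.

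I expect the only real subtlety to be the bookkeeping around $\mathcal{O}_S$: verifying that deleting $x$ enlarges the complement exactly by adjoining $x$, and translating the equality $\mathcal{O}_S(T\setminus\{x\})=x$ into the strict comparison $x\succ\mathcal{O}_S(T)$ via totality of the monomial order. The termination argument and the appeals to Lemmas \ref{uptree} and \ref{downtree} are routine, so the main effort is this careful translation together with confirming that a child can only arise by removing a single minimal generator.
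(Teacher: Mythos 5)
Your proposal is correct and follows essentially the same route as the paper's proof: the ascending sequence $T_{i+1}=T_i\cup\{\mathcal{O}_S(T_i)\}$ together with finiteness of $S\setminus T$ gives the unique path to the root $S$, and the children are identified by combining Lemma \ref{downtree} with the comparison $\mathcal{O}_S(T\setminus\{x\})=\max_\preceq\bigl((S\setminus T)\cup\{x\}\bigr)=x \iff x\succ\mathcal{O}_S(T)$, exactly as in the paper. Your write-up is somewhat more careful than the paper's (justifying why $S\setminus T$ is finite via $PF(T)$ and treating the root case $T=S$ explicitly), but these are refinements of the same argument rather than a different approach.
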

\begin{proof}
    Let $T\in \mathcal{J(S)}$. We 
    consider the sequence of $I(S)$-semigroups $\{T_i\}_{i\in \N}$ defined by $T_{i+1}=T_i\cup\{\mathcal{O}_S(T_i)\}$. Considering that $S\setminus T$ is finite, 
    a unique path exists connecting $T$ with $S$, defined using the above sequence. Regarding the second assertion. If $B$ a child of $T$ then, $T=B\cup\{\mathcal{O}_S(B)\}$, thus by Lemma \ref{downtree}, $B=T\setminus \{\mathcal{O}_S(B)\}$ is an $I(S)$-semigroup and, $\mathcal{O}_S(B)\in imsg_S(T\setminus\{0\}) $. By definition, $\mathcal{O}_S(B)\ne\mathcal{O}_S(T)$. If $\mathcal{O}_S(B)\prec\mathcal{O}_S(T)$, then by the maximality of $B$, $\mathcal{O}_S(T)\in B$, which it is not possible. Conversely, suppose that $B=T\setminus\{x\}$ is an $I(S)$-semigroup with $x\succ \mathcal{O}_S(T)$, whence $\mathcal{O}_S(B)=\max_\preceq(S\setminus B)=\max_\preceq\{\max_\preceq(S\setminus T),x\}=x.$
\end{proof}

The above result can be used to recurrently build $G(\mathcal{J(S)})$. Let us prove that it is indeed infinite. 

\begin{proposition}
    Let $S$ be a $\CaC$-semigroup, and $g$ be an integer greater than or equal to $g(S)$. Then, at least one $I(S)$-semigroup $T$ with genus $g$ exists.
\end{proposition}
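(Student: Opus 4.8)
The plan is to realise the prescribed genus by starting at the root $S$ of the tree $G(\mathcal{J(S)})$ and descending one edge at a time, arguing that each descent raises the genus by exactly one. Since $S$ is itself an $I(S)$-semigroup with $g(S)$ gaps and $g\ge g(S)$, it suffices to build a chain
\[
S=T_0\supsetneq T_1\supsetneq\cdots\supsetneq T_{g-g(S)}
\]
of $I(S)$-semigroups with $g(T_{k+1})=g(T_k)+1$ at every stage; the last term then has genus $g$, as required.

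For the inductive step, suppose $T_k$ is an $I(S)$-semigroup with $\CaC_{T_k}=\CaC_S$. First I would choose any $x\in imsg_S(T_k\setminus\{0\})$. Such an $x$ exists because, by Lemma \ref{imsg}, $imsg_S(T_k\setminus\{0\})=Minimals_{\leq_S}(T_k\setminus\{0\})$, and $T_k\setminus\{0\}$ is a non-empty subset of $\N^p$ on which $\leq_S$ is well-founded (a strictly $\leq_S$-decreasing chain forces the coordinate sum to strictly decrease), so its set of minimal elements is non-empty. By Lemma \ref{downtree}, $T_{k+1}:=T_k\setminus\{x\}$ is again an $I(S)$-semigroup.

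The crux is to verify that $\CaC_{T_{k+1}}=\CaC_S$, for then $\CaC_{T_{k+1}}\setminus T_{k+1}=(\CaC_S\setminus T_k)\cup\{x\}$, whence $g(T_{k+1})=g(T_k)+1$. Here I would exploit that $T_k$ is a $\CaC$-semigroup: the complement $\CaC_S\setminus T_k$ is finite, so each extremal ray $\tau_i$ contains infinitely many points of $T_k$, and deleting the single point $x$ still leaves points of $T_{k+1}$ on every $\tau_i$. Taking any finite $Y\subseteq T_{k+1}\setminus\{0\}$ with $Y\cap\tau_i\ne\emptyset$ for all $i\in[t]$ and invoking Corollary \ref{I(S)C-smp} then shows $T_{k+1}$ is a $\CaC$-semigroup, and in particular $\CaC_{T_{k+1}}=\CaC_S$, so the genus of $T_{k+1}$ is well defined and equals $g(T_k)+1$.

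Finally, the descent never stalls before genus $g$ is reached: since $\CaC_{T_k}=\CaC_S$ is a non-trivial cone at every stage, $T_k$ is infinite, so $T_k\setminus\{0\}\ne\emptyset$ and a minimal generator $x$ to remove always exists. Iterating the step $g-g(S)$ times produces the desired $I(S)$-semigroup of genus $g$. The step I expect to be the main obstacle is precisely the cone-invariance claim $\CaC_{T\setminus\{x\}}=\CaC_S$, as it is what simultaneously guarantees that one removal increases the genus by exactly one and that the $\CaC$-semigroup property is preserved; it rests essentially on the finiteness of the gap set encoded in the definition of a $\CaC$-semigroup.
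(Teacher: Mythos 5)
Your proof is correct, but it runs in the opposite direction to the paper's. The paper first \emph{overshoots} the target genus: it forms the ideals $T_j=E_j+S$, where $E_j$ replaces one $i$-multiplicity $n_k$ by successively larger elements of $S\cap\tau_k$, so that the genus of $T_j$ eventually reaches or exceeds $g$; it then applies Lemma \ref{uptree} repeatedly, each application adjoining $\mathcal{O}_S(T)$ and lowering the genus by exactly one, until the genus equals $g$. You instead descend from the root $S$, deleting one element of $imsg_S(T_k\setminus\{0\})$ per step, each deletion raising the genus by exactly one. Your steps all check out: existence of a minimal element follows from well-foundedness of $\leq_S$, as you say; Lemma \ref{downtree} applies because any $x\in imsg_S(T_k\setminus\{0\})$ is indeed a minimal generator of $T_k$ (by the unlabelled lemma of Section \ref{treesection}, $imsg_S(T_k\setminus\{0\})=Minimals_{\leq_S}(msg(T_k))\subseteq msg(T_k)$ --- worth stating explicitly, since $x\in msg(T)$ is a hypothesis of Lemma \ref{downtree}); and your cone-invariance argument is sound, since $\CaC_S\setminus T_k$ is finite while each extremal ray carries infinitely many lattice points, so Corollary \ref{I(S)C-smp} (equivalently Lemma \ref{affinePcth}) applies to $T_{k+1}$. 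What your route buys: it is self-contained, needing no auxiliary ideals $E_j+S$ and no argument that their genus grows without bound, and it produces a single nested chain of $I(S)$-semigroups realizing every genus from $g(S)$ to $g$. What the paper's route buys: adjoining elements of $S$ can only shrink the gap set, so the $\CaC$-semigroup property never needs to be re-verified along the upward walk (the only place it is needed is for the single semigroup $T_{j_0}$, where it is automatic because $E_{j_0}$ meets every extremal ray), whereas your descent must re-establish that property at every step; moreover, the paper's upward steps follow the same edge orientation of the tree $G(\mathcal{J}(S))$ that drives Algorithm \ref{alg}.
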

\begin{proof}
    Let $E_0=\cup_{i\in [t]} \mult_i(S)=\{n_1,\ldots, n_t\}$, and $T_0=E_0+S$ be an $I(S)$-semigroup. If $g(T_0)= g$ we have already finished. Otherwise $g(T_0)< g$, we consider the sequence of $I(S)$-semigroups $\{T_j\}_{j\in \N}$ defined by $T_j=E_j+S$, where
    $$E_j=\Big\{ n_1,\ldots, n_{k-1}, \min_\preceq\{(S\cap \tau_k)\setminus E_{j-1}\} , n_{k+1},\ldots ,n_t\Big\},$$
    for a fixed $k\in [t]$. Hence, a positive integer $j_0$ exists such that $g(T_{j_0})= g$. Applying Lemma \ref{uptree}  $g-g(T_{j_0})$ times, we obtain an $I(S)$-semigroup $T$ with genus $g$.
\end{proof}

Let $S$ be a $\CaC$-semigroup, and let $g$ be an integer such that $g\geq g(S)$. Consider the set $\mathcal{J}(S)_g=\{T \mid T\text{ is an $I(S)$-semigroup with } g(T)\leq g\}$. As a consequence of Theorem \ref{thrmTree}, Algorithm \ref{alg} computes $G(\mathcal{J}(S)_g)$.

\begin{algorithm}
\caption{Computing all $I(S)$-semigroups of genus up to $g\geq g(S)$.}\label{alg}
\KwIn{Let $S$ be a $\CaC$-semigroup and an integer $g$ such that $g\geq g(S)$.}
\KwOut{The set $\{T\mid T\text{ is an $I(S)$-semigroup with } g(T)\leq g\}$.}
\If {$g(S)=g$}
    {\Return{$S$}}
$I \leftarrow \{S\}$\;
$X \leftarrow \emptyset$\;
\For{$i\in [g(S),g]$}{
    $Y \leftarrow \emptyset$\;
    \While {$I\ne\emptyset$}{
        $T \leftarrow \text{First}(I)$\;
        $B_T \leftarrow \{x\in imsg_S(T\setminus\{0\})\mid x\succ \mathcal{O}_S(T)\}$\;
        $Y \leftarrow Y\cup\{T\setminus\{x\}\mid x\in B_T\}$\;
        $I \leftarrow I\setminus \{T\}$\;
    }
    $X \leftarrow  X\cup \{Y\}$\;
    $I \leftarrow Y$\;
    }
    \Return{$X$}
\end{algorithm}

The next example illustrates Algorithm \ref{alg}.

\begin{example}\label{example_tree}
Let $S$ be the $\CaC$-semigroup with genus 4, and minimally generated by the set $\{(5, 1), (6, 2), (9, 2), (9, 3), (10, 3), (12, 3), (13, 4), (13, 3)\}$. Applying Algorithm \ref{alg} to $S$, we obtain that the amount of $I(S)$-semigroups up to genus 6 is 51. The tree $G(\mathcal{J}(S)_6)$ is shown in Figure \ref{fig:tree}. To ensure more clarity in the figure, each vertex of the tree is labelled with the element removed to reach its parent node.
\end{example}
\begin{figure}[h]
    \includegraphics[scale=0.49,angle=90]{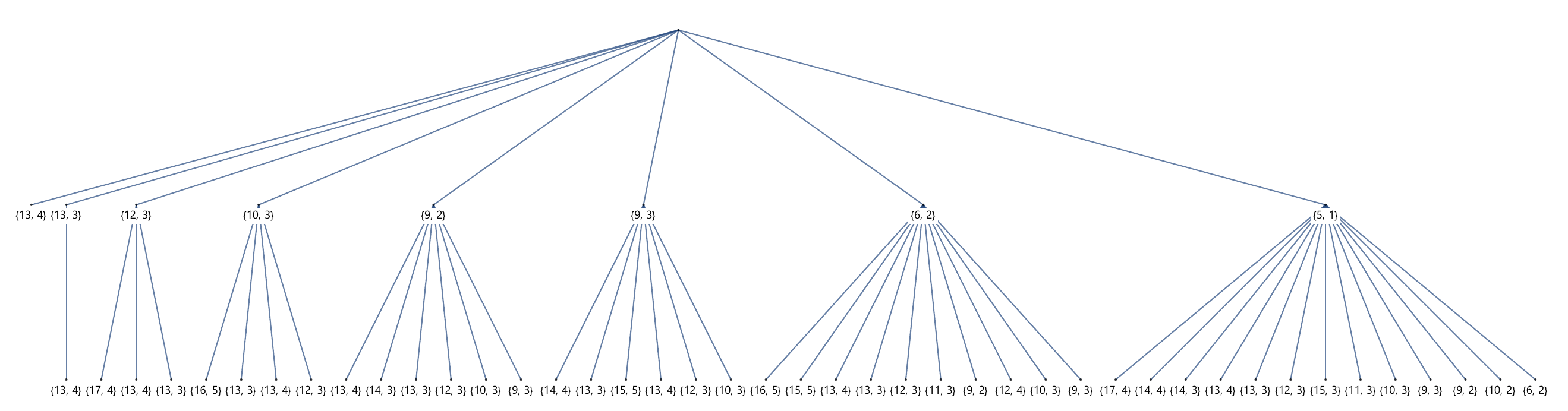}
    \caption{The tree $G(\mathcal{J}(S)_6)$.}
    \label{fig:tree}
\end{figure}

\section{Computing $I(S)$-semigroups with a fixed Frobenius element and i-multiplicities}\label{FrobMultSection}

This section assumes that any $I(S)$-semigroup is a $\CaC$-semigroup. One objective of this section is to explicitly describe all the $I(S)$-semigroups with a fixed Frobenius element. Given $f\in \CaC$ and a monomial order $\preceq$, consider $A_S(f)=\{x\in S\mid x\prec f \text{ and } f-x\notin S\}$. 

\begin{proposition}
    Let $S$ be a $\CaC$-semigroup, $f\in \CaC$ greater than or equal to $\Fb(S)$, and $\preceq$ a monomial order. The following conditions are equivalent:
    \begin{enumerate}
        \item $T$ is an $I(S)$-semigroup with Frobenius element $\Fb(T)=f$.
        \item $T=X\sqcup\{x\in \CaC\mid x\succ f\}\cup\{0\}$, where $X$ is a subset of $A_S(f)$ such that if there exists $x\in X$ with $x+s\prec f$ for some $s\in S$, then $x+s\in X$.
    \end{enumerate}
\end{proposition}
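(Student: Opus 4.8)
The plan is to prove the two implications separately; in each direction the set $X$ can be read off directly and the conditions checked by comparing elements against $f$ in the order $\preceq$.

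For (1)$\Rightarrow$(2), I would start from the fact that an $I(S)$-semigroup satisfies $T\subseteq S$ (its nonzero part is an ideal of $S$), that $f\notin T$, and that, since $f=\max_\preceq(\CaC\setminus T)$, every $x\in\CaC$ with $x\succ f$ already lies in $T$. The natural candidate is $X:=\{x\in T\mid 0\prec x\prec f\}$, which gives the partition $T=X\sqcup\{x\in\CaC\mid x\succ f\}\cup\{0\}$ (here $\{x\in\CaC\mid x\succ f\}=T\cap\{x\succ f\}$ by the previous remark). To see $X\subseteq A_S(f)$, note each $x\in X$ lies in $S$ with $x\prec f$; and if $f-x$ were in $S$, then $f=x+(f-x)$ would lie in the ideal $T\setminus\{0\}$, contradicting $f\notin T$, so $f-x\notin S$. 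The closure property is then immediate from the ideal condition: if $x\in X$ and $x+s\prec f$ with $s\in S$, then $x+s\in T\setminus\{0\}$ and $0\prec x+s\prec f$, so $x+s\in X$.

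For (2)$\Rightarrow$(1), set $P:=T\setminus\{0\}=X\sqcup\{x\in\CaC\mid x\succ f\}$. First I would check $T\subseteq S$: since $f\succeq\Fb(S)=\max_\preceq(\CaC\setminus S)$, any $x\in\CaC$ with $x\succ f$ is $\succ\Fb(S)$ and hence lies in $S$, while $X\subseteq A_S(f)\subseteq S$. Next I would verify $P+S\subseteq P$ by a case split on $p\in P$: if $p\succ f$ then $p+s\succeq p\succ f$ with $p+s\in\CaC$, so $p+s\in\{x\succ f\}\subseteq P$; if $p\in X$ then $p+s\in S$, and $p+s\ne f$ (otherwise $s=f-p\in S$ would contradict $p\in A_S(f)$), so either $p+s\succ f$ and $p+s\in P$, or $p+s\prec f$ and the closure hypothesis on $X$ forces $p+s\in X\subseteq P$. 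Thus $P$ is an ideal of $S$. To upgrade this to the statement that $T=P\cup\{0\}$ is an $I(S)$-semigroup that is a $\CaC$-semigroup, I would apply Proposition~\ref{Pideal<->PC-smgp}: it suffices to exhibit a finite $Y\subseteq P$ meeting every extremal ray $\tau_i$, which I obtain by picking $y_i\in\tau_i$ with $y_i\succ f$ so that $Y=\{y_1,\ldots,y_t\}\subseteq\{x\succ f\}\subseteq P$. Finally, for $\Fb(T)=f$: we have $f\in\CaC$, $f\ne 0$, $f\notin X$ (as $A_S(f)$ consists of elements $\prec f$) and $f\notin\{x\succ f\}$, so $f\in\CaC\setminus T$; since every $x\in\CaC$ with $x\succ f$ belongs to $T$, indeed $f=\max_\preceq(\CaC\setminus T)=\Fb(T)$.

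The step I expect to be the main obstacle is the $\CaC$-semigroup claim in the backward direction, i.e. ensuring both that each extremal ray $\tau_i$ genuinely contains an element $\succ f$ (so that $\CaC_P=\CaC_S$ via Proposition~\ref{Pideal<->PC-smgp}) and that $\CaH(T)=\CaC\setminus T$ is finite. This is precisely the point where the standing assumption of the section—that every $I(S)$-semigroup is a $\CaC$-semigroup—and the behaviour of the fixed monomial order on $\N^p$ must be invoked; by contrast, the ideal verification and the Frobenius computation are routine comparisons in $\preceq$.
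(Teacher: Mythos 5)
Your argument reproduces the paper's proof almost step for step: the same choice $X=T\cap A_S(f)$ in the forward direction, the same contradiction $f=x+(f-x)\in T\setminus\{0\}$ showing that elements of $T$ below $f$ lie in $A_S(f)$, and the same case analysis ($x+s\neq f$; $x+s\succ f$; $x+s\prec f$ plus the closure hypothesis) for the ideal property in the converse. In fact you prove strictly more than the paper does: the paper's proof of (2)$\Rightarrow$(1) ends once $T\setminus\{0\}$ is shown to be an ideal of $S$, and never checks that $T$ is an affine (let alone $\CaC$-) semigroup or that $\Fb(T)=f$; your treatment of both points, as well as your explicit verification of the closure property of $X$ in the forward direction, is sound modulo the one claim you flag.

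That flagged claim --- that every extremal ray $\tau_i$ contains a lattice point $\succ f$ --- is a genuine gap, and it cannot be closed under the paper's stated hypotheses. It holds whenever $\{x\in\N^p\mid x\prec f\}$ is finite (e.g.\ for the degree lexicographic order used in the paper's examples, and implicitly needed for Algorithm 2 to terminate), but it fails for the plain lexicographic order, which the paper's definition of monomial order admits. Worse, for lex the implication (2)$\Rightarrow$(1) is simply false: take $S=\N^2\setminus\{(1,0)\}$, a $\CaC$-semigroup with $\CaC=\N^2$, the lex order giving priority to the first coordinate, $f=\Fb(S)=(1,0)$, and $X=\emptyset$. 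Condition (2) then holds vacuously, yet $T=\{0\}\cup\{x\in\N^2\mid x\succ (1,0)\}$ has infinitely many minimal generators $(1,b)$, $b\geq 1$ (no such element is a sum of two nonzero elements of $T$, since first coordinates add), so $T$ is not finitely generated and hence not an $I(S)$-semigroup at all. So what is missing in your proposal is not a routine verification but an additional hypothesis on $\preceq$ (finiteness of $\{x\in\CaC\mid x\prec f\}$, or degree compatibility), under which your choice of $Y=\{y_1,\ldots,y_t\}$ and the appeal to Proposition \ref{Pideal<->PC-smgp} work exactly as you describe, and the Frobenius computation goes through. This defect is inherited from the statement and the paper's own proof, which silently skips the entire issue; relative to that proof, yours is the more complete one.
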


\begin{proof}
    If $T$ is an $I(S)$-semigroup with Frobenius element $\Fb(T)=f\succeq \Fb(S)$, then $\{x\in \CaC\mid x\succ f\}\cup\{0\}\subseteq T$. Let $x\in T$ such that $x\prec f$, it follows that $x\in A_S(f)$, otherwise $f-x\in S$, and since $T\setminus\{0\}$ is an ideal of $S$, we have that $x+(f-x)=f\in T\setminus\{0\}$, contradicting that $\Fb(T)=f$. So, $T=X\sqcup\{x\in \CaC\mid x\succ f\}\cup\{0\}$ where $X= T\cap A_S(f)$.
    
    Conversely, let us show that $T\setminus\{0\}=X\sqcup\{x\in \CaC\mid x\succ f\}$ is an ideal of $S$. Trivially, $T\setminus\{0\}\subseteq S$. Let $x\in T\setminus\{0\}$ and $s\in S$, we assume that $x\prec f$, otherwise the result is clear. Thus, $x\in X$. We can differentiate cases based on $x+s$. Notably, $x+s\neq f$ since $x\in X$. If $x+s\succ f$, then $x+s\in T\setminus\{0\}$. If $x+s\prec f$, then, by hypothesis, $x+s\in X$. Thus, $T+S\subseteq T$.
\end{proof}

The above result provides an algorithm for computing all the $I(S)$-semigroups whose Frobenius element equals $f$. Given any subset $A$, we denote by $\mathcal{P}(A)$ the set of all possible subsets of $A$.

\begin{algorithm}[H]
\caption{Computing all $I(S)$-semigroups with a fixed Frobenius element.}\label{computeI(S)fixedFb}
\KwIn{A $\CaC$-semigroup $S$, and $f\in \CaC\setminus\{0\}$.}
\KwOut{$\{T\mid T \text{ is an }I(S) \text{-semigroup with } \Fb(T)=f\}$.}
\If{$f\prec \Fb(S)$}
    {\Return $\emptyset$}
$L \leftarrow \{s\in S\mid s\prec f\}$\\
$B \leftarrow \{x\in L\mid f-x\notin S\} $\\
$P \leftarrow \mathcal{P}(B)$\\
$\mathcal R\leftarrow \emptyset$\\

\While{$P\neq \emptyset$}
    {$X \leftarrow First(P)$\\
     \If{$x+s\in X$, for all $x\in X$ and $s\in L$ such that $x+s\prec f$}
        {$\mathcal R= \mathcal  R \cup \big(X \sqcup \{x\in \CaC\mid x\succ f\}\cup\{0\}\big)$}
    $P \leftarrow P\setminus \{X\}$\\
    }

\Return{$\mathcal R$}
\end{algorithm}

\begin{example}
    Let $S$ be the $\CaC$-semigroup appearing in Example \ref{example_tree}, and consider $f=(11,3)$. Applying Algorithm \ref{computeI(S)fixedFb} with the degree lexicographic order fixed, we obtain that all $I(S)$-semigroups with Frobenius element $f=(11,3)$ are determined by $X \sqcup \{x\in \CaC\mid x\succ (11,3)\}\cup\{0\}$ for each $X$ in the set
    \begin{multline*}
        \big\{\{\emptyset\}, \{(9, 2)\}, \{(9, 3)\}, \{(10, 2)\}, \{(10, 3)\}, \{(9, 2), (9, 3)\}, \{(9, 2), (10, 2)\}, \\
        \{(9, 2), (10, 3)\}, \{(9, 3), (10, 2)\}, \{(9, 3), (10, 
   3)\}, \{(10, 2), (10, 3)\}, \{(9, 2), (9, 3), (10, 2)\},\\
   \{(9, 2), (9, 3), (10, 3)\}, \{(9, 2), (10, 2), (10, 3)\},
   \{(9, 3), (10, 2), (10, 3)\},\\ \{(9, 2), (9, 3), (10, 2), (10, 3)\}\big\}.
    \end{multline*}
\end{example}

The other aim of this section is to propose an algorithm to compute all the $I(S)$-semigroups fixed the multiplicity of each external ray. To show it, we need the following technical lemma.
\begin{lemma}
    Let $S$ be a $\CaC$-semigroup, and $M=\{m_1,\ldots, m_t\}\subset S\setminus\{0\}$ such that  $m_i\in\tau_i\setminus\{0\}$ for all $i\in[t]$. Then, the set 
    $$\mathcal T=\{T \text{ is an } I(S)\text{-semigroup }\mid \cup_{i\in[t]} \mult_i(T)=M\}$$
    is a non-empty finite set.
\end{lemma}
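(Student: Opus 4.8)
The plan is to prove that $\mathcal{T}$ is non-empty by exhibiting a canonical element, and then prove finiteness by bounding each $I(S)$-semigroup in $\mathcal{T}$ both below and above in a way that forces only finitely many possibilities. For non-emptiness, the natural candidate is the $I(S)$-semigroup generated by $M$ itself, namely $T_0=(M+S)\cup\{0\}$. Since each $m_i\in\tau_i\setminus\{0\}$, the set $Y=M$ meets every extremal ray $\tau_i$, so by Proposition \ref{Pideal<->PC-smgp} (or Corollary \ref{I(S)C-smp}) we know $T_0$ is a $\CaC$-semigroup and hence an $I(S)$-semigroup. It then remains to verify that $\cup_{i\in[t]}\mult_i(T_0)=M$, i.e.\ that $m_i$ is exactly the minimal element of $\tau_i\cap T_0$ for the componentwise order; this should follow because any element of $\tau_i\cap(M+S)$ has the form $m_j+s$ and, restricting to the ray $\tau_i$, the $\leq_S$-minimal such element is $m_i$ (using that $m_i\in\tau_i$ and that adding a nonzero semigroup element strictly increases the componentwise position).

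For finiteness, the key observation is that fixing the $i$-multiplicities of $T$ to be exactly $M$ pins down both the extremal rays and the cone: any $T\in\mathcal{T}$ has $\mult_i(T)=m_i\in\tau_i$, so $\CaC_T=\CaC_S=\CaC$. This gives a lower containment $T_0=(M+S)\cup\{0\}\subseteq T$, because $M\subseteq T\setminus\{0\}$ forces $M+S\subseteq T\setminus\{0\}$ as $T\setminus\{0\}$ is an ideal of $S$. For the upper bound, I would argue that $T\subseteq S$ always (being an $I(S)$-semigroup), and that the minimality condition $\mult_i(T)=m_i$ prevents $T$ from containing any element of $\tau_i$ strictly below $m_i$ in the componentwise order. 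Combining these, every $T\in\mathcal{T}$ satisfies $(M+S)\cup\{0\}\subseteq T\subseteq S$, and the candidates differ only by which elements of the finite set $S\setminus\big((M+S)\cup\{0\}\big)$ they include.

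The crux is therefore showing that $S\setminus\big((M+S)\cup\{0\}\big)$ is finite, since then $\mathcal{T}$ injects into the (finite) power set of this difference. This finiteness is essentially the content of the cardinality computation inside the proof of Proposition \ref{Pideal<->PC-smgp}: there it is shown that $\sharp\big(\CaC_S\setminus(M+\CaC_S)\big)<\infty$ by choosing, for each generator $b_i$ of $\CaC_S$, a positive integer $k_i$ with $k_ib_i\in\langle M\rangle$, and bounding the uncovered region by $\{\sum\lambda_ib_i\mid \lambda_i\le k_i\}$. Since $S\subseteq\CaC_S$ and $(M+S)\supseteq$ the relevant covered portion up to the finite gap set $\CaH(S)$, the set $S\setminus\big((M+S)\cup\{0\}\big)$ is contained in a finite set and is thus finite.

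The main obstacle I anticipate is the subtlety in confirming that the fixed-multiplicity constraint really does bound $T$ from below by $M+S$ and not merely by some smaller ideal: one must be careful that $\mult_i(T)=m_i$ genuinely forces $m_i\in T$ for every $i$, and conversely that no $T\in\mathcal{T}$ can contain a ray element below $m_i$. Once $M\subseteq T\setminus\{0\}$ is established, the ideal property does the rest, but verifying the $\leq_S$-minimality bookkeeping for each ray — distinguishing the componentwise order $\leq$ used to define $\mult_i$ from the order $\leq_S$ used for ideal generators — is where the argument requires the most care.
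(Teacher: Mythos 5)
Your proof is correct and follows essentially the same route as the paper: non-emptiness is witnessed by the canonical element $(M+S)\cup\{0\}$, and finiteness comes from observing that every $T\in\mathcal{T}$ is determined by a subset of the finite set $S\setminus\big((M+S)\cup\{0\}\big)$. The only cosmetic difference is that the paper parameterizes $T$ by its ideal minimal generating set $imsg_S(T\setminus\{0\})=M\sqcup X$ via Theorem \ref{thrIdealX+S}, whereas you sandwich $(M+S)\cup\{0\}\subseteq T\subseteq S$ directly; both arguments inject $\mathcal{T}$ into the power set of the same finite set.
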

\begin{proof}
    Let $T$ be an $I(S)$-semigroup with $\cup_{i\in[t]} \mult_i(T)=M$. By applying Theorem \ref{thrIdealX+S}, $T\setminus\{0\}=imsg_S(T\setminus\{0\})+S$, and thus $imsg_S(T\setminus\{0\})=M\sqcup X$, where $X$ is a subset of $S\setminus((M+S)\cup \{0\})$. As $S\setminus(M+S)$ is finite, we deduce that there exists a finite amount of $I(S)$-semigroups $T$ with $\cup_{i\in[t]}\mult_i(T)=M$.

    Note that $\mathcal T$ is not empty since $(M+S)\cup \{0\}$ is an $I(S)$-semigroup satisfying that $\cup_{i\in[t]}\mult_i((M+S)\cup \{0\})=M$.
\end{proof}

We are now in a position to describe the proposed algorithm. This algorithm is directly deduced from the proof of the above lemma.

\begin{algorithm}[H]
\caption{Computing all $I(S)$-semigroups with a fixed multiplicity of each external ray.}\label{computeI(S)fixedM}
\KwIn{A $\CaC$-semigroup $S$ and $M=\{m_1,\ldots, m_t\}\subset S\setminus\{0\}$ such that  $m_i\in\tau_i\setminus\{0\}$ for all $i\in[t]$.}
\KwOut{$\{T\mid T \text{ is a }\CaC \text{-semigroup with } \cup_{i\in[t]}\mult_i(T)=\{m_1,\ldots,m_t\}\}$.}

$H \leftarrow \CaH\big( (M+S)\cup \{0\}\big)$\\

$B \leftarrow H\cap S$\\
\Return{$\left\{\big((M\sqcup X)+S\big)\cup \{0\}\mid X\in \mathcal P (B) \right\}$}
\end{algorithm}

\begin{example}
Again, let $S$ be the $\CaC$-semigroup introduced in Example \ref{example_tree}, and consider $M=\{(10,2),(6,2)\}$. Algorithm \ref{computeI(S)fixedM} computes the $2047$ sets $X$ determining all $I(S)$-semigroups with the $i$-multiplicities in $M$, but not all of these $I(S)$-semigroups are different. For example, for the set
$$B=\{(5,1),(9,2),(9,3),(10,3),(12,3),(13,3),(13,4),(14,3),(14,4),(17,4),(18,4)\}$$
obtained, the sets
$$X_1=\{(5, 1), (9, 2), (9, 3), (10, 3), (12, 3), (13, 3), (13, 4), (14,  3), (14, 4), (17, 4)\},$$
and
$$X_2=\{(5, 1), (9, 2), (9, 3), (10, 3), (12,  3), (13, 3), (13, 4), (14, 3), (14, 4), (18, 4)\}$$
belong to $\mathcal P (B)$. As the $S$-incomparable sets of $M\sqcup X_1$ and $M\sqcup X_2$ are the same set $$\{(5, 1), (6, 2), (9, 2), (9, 3), (10, 3), (12, 3), (13, 3), (13, 4)\},$$ 
according to Theorem \ref{thrIdealX+S}, we know that both sets yield the same $I(S)$-semigroup. Algorithm \ref{computeI(S)fixedM} computes the $351$ $I(S)$-semigroups such that $M$ is the union of the $i$-multiplicities of any of them.
\end{example}

\section{Ideals of semigroups and affine MED-semigroups}\label{MEDSection}

From now on, let $S\subset \N^p$ be an affine semigroup with $t$ extremal rays, and minimally generated by $E\sqcup A$ with $E=\cup_{i\in [t]} \mult_i (S)=\{n_1,\ldots ,n_t\}$, and $A=\{n_{t+1},\ldots ,n_r\}$. In this section, we consider the ideals of affine semigroups $M+S$ with $M=\{m_1,\ldots ,m_t\}$ a finite subset of $S$ such that $m_i\in \tau_i\setminus\{0\}$ for any $i\in [t]$. This kind of ideals allows us to generalise the concept of maximal embedding dimension from numerical semigroup to affine semigroups. Let us start with some necessary definitions and results.

Recall that the Apery set of $S$ respect to $m\in S$ is the set $\Ap(S,m)=\{ s\in S\mid s-m\notin S\}$. For any non-numerical affine semigroup, this set is not finite, but the intersection $\cap _{i\in [t]} \Ap(S,p_i)$ is finite for any fixed elements $p_i\in \tau_i\cap S$. Note that $A\subset \cap _{i\in [t]} \Ap(S,n_i)$, that is, $E\sqcup \big(\cap _{i\in [t]} \Ap(S,n_i)\setminus\{0\}\big)$ is a generating set of $S$.

\begin{definition}
    Given $S\subset \N^p$ an affine semigroup minimally generated by $E\sqcup A$, $S$ is a maximal embedding dimension affine semigroup (MED-semigroup) if $\cap _{i\in [t]} \Ap(S,n_i)= A\sqcup \{0\}$.
\end{definition}

These semigroups can be characterised by their minimal generating sets.

\begin{proposition}\label{caracterizacion_MED}
The affine semigroup $S$ is a MED-semigroup if and only if for any $i,j\in [t+1,r]$, there exists $k\in [t]$ such that $n_i+n_j-n_k\in S$.
\end{proposition}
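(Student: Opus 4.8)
The plan is to rephrase the MED condition as a single set inclusion and then prove each implication by a short ``peeling'' argument on representations by minimal generators. Write $W=\cap_{i\in[t]}\Ap(S,n_i)$. Since the excerpt already records that $A\subset W$, and since $0\in W$ (because $0-n_i=-n_i\notin S$ for every $i$), we always have $A\sqcup\{0\}\subseteq W$; hence $S$ is a MED-semigroup if and only if the reverse inclusion $W\subseteq A\sqcup\{0\}$ holds. The engine of both directions is the following elementary observation: an element $s\in S$ lies in $W$ exactly when $s-n_k\notin S$ for all $k\in[t]$, which is equivalent to saying that no multiplicity generator $n_k$ (with $k\in[t]$) can appear with positive coefficient in \emph{any} expression of $s$ over $E\sqcup A$ (peeling off one $n_k$ would otherwise exhibit $s-n_k\in S$). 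Consequently, $s\in W$ forces a representation $s=\sum_{j=t+1}^r\mu_j n_j$ using only the generators of $A$.

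For the implication $(\Rightarrow)$, I would assume $S$ is MED, so $W=A\sqcup\{0\}$, and fix $i,j\in[t+1,r]$. The idea is to test the element $s=n_i+n_j\in S$. Because $n_i,n_j$ are nonzero, $s\neq 0$; and because a minimal generator cannot be written as a sum of two nonzero elements of $S$, we get $s\notin A$. Thus $s\notin A\sqcup\{0\}=W$. Unwinding the definition of $W$, the failure of $s\in W$ together with $s\in S$ means precisely that there exists $k\in[t]$ with $s-n_k=n_i+n_j-n_k\in S$, which is the desired conclusion.

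For the converse $(\Leftarrow)$, I would assume the stated condition and take any $s\in W\setminus\{0\}$, aiming to show $s\in A$. By the peeling observation, $s=\sum_{j=t+1}^r\mu_j n_j$. If $\sum_j\mu_j=1$ then $s=n_j\in A$ and we are done, so suppose the total $A$-degree is at least $2$ and write $s=n_a+n_b+\rho$ with $a,b\in[t+1,r]$ (possibly $a=b$) and $\rho\in S$ a sum of the remaining $A$-generators. Applying the hypothesis to the pair $a,b$ yields $k\in[t]$ with $n_a+n_b-n_k\in S$, whence $s-n_k=(n_a+n_b-n_k)+\rho\in S$, contradicting $s\in W$. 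Therefore every nonzero element of $W$ lies in $A$, i.e.\ $W\subseteq A\sqcup\{0\}$.

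The arguments are short, and the only point requiring care---the ``main obstacle'' such as it is---is the peeling observation and its careful use of \emph{minimal} generation: one must justify that membership in $W$ annihilates all $E$-coefficients in every representation, and that a minimal generator admits no nontrivial decomposition in $S$. Both facts are standard for finitely generated affine semigroups; once they are pinned down, the two implications follow directly.
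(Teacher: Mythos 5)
Your proposal is correct and follows essentially the same route as the paper's proof: the forward direction uses that a minimal generator in $A$ cannot decompose as a sum of two nonzero elements, so $n_i+n_j$ must leave the Apery intersection, and the converse uses the same ``peeling'' fact that membership in $\cap_{i\in[t]}\Ap(S,n_i)$ forces a representation purely over $A$, after which the hypothesis pins the $A$-degree to $1$. Your write-up merely makes explicit two steps the paper leaves implicit (that $0$ and $A$ always lie in the intersection, and why every representation of an Apery element avoids $E$), so no further comparison is needed.
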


\begin{proof}
    Assume that $S$ is a MED-semigroup. Thus, any non-zero $m\in S$ satisfies that $m-n_k\notin S$ for every $k\in [t]$ if and only if $m\in A$. Since the elements in $A$ are all minimal generators, we have that $n_i+n_j\notin \cap _{k\in [t]} \Ap(S,n_k)$ for any $i,j\in [t+1,r]$.

    Conversely, let $m$ be an element belonging to $\cap _{i\in [t]} \Ap(S,n_i)$, and consider that for all $i,j\in [t+1,r]$ there exists at least an integer $k\in [t]$ such that $n_i+n_j-n_k\in S$. Hence, $m=\sum_{q=t+1}^r\lambda _q n_q$ for some $\lambda _{t+1},\ldots ,\lambda _{r}\in \N$. Our hypothesis means that $\sum_{q=t+1}^r\lambda _q=1$, and so Proposition holds.
\end{proof}

The following result determines the relationship between affine MED-semigroups and the affine ideals. 
In addition, this lemma provides a method to construct an arbitrary number of affine MED-semigroups.

\begin{lemma}\label{MED_E_plus_S}
Let $S$ be an affine semigroup, $M=\{m_1,\ldots ,m_t\}\subset S$ such that $M\cap (\tau_i \setminus \{0\})\neq \emptyset$ for every $i\in [t]$, and the ideal of $S$ defined by $M+S$. Then, the $I(S)$-semigroup $T=(M+S)\cup \{0\}$ is a MED-semigroup. Moreover, $$\CaH(T)=\CaH(S)\sqcup \left(\cap _{i\in [t]} \Ap(S,m_i)\setminus\{0\}\right).$$
\end{lemma}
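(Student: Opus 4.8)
The statement to prove, Lemma \ref{MED_E_plus_S}, makes two assertions about the $I(S)$-semigroup $T = (M+S)\cup\{0\}$: first, that $T$ is a MED-semigroup, and second, the precise gap formula $\CaH(T) = \CaH(S) \sqcup \left(\cap_{i\in[t]}\Ap(S,m_i)\setminus\{0\}\right)$. My plan is to handle the gap formula first, since understanding which elements of $S$ fall out of $T$ will clarify the structure needed for the MED claim.

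For the gap formula, I would start by noting that $\CaC_T = \CaC_S$ (this follows from Lemma \ref{affinePcth}, since $M$ meets every extremal ray $\tau_i$), so $\CaH(T) = \CaC_S \setminus T$. Partition $\CaC_S\setminus T$ according to whether an element lies in $S$ or not: the part outside $S$ is exactly $\CaC_S\setminus S = \CaH(S)$, which sits in $T$'s gaps since $\CaH(S)\cap T \subseteq \CaH(S)\cap S = \emptyset$. The remaining gaps are the elements of $S\setminus T = S\setminus\big((M+S)\cup\{0\}\big)$. The key identification I would establish is that for a nonzero $s\in S$, we have $s\notin M+S$ if and only if $s - m_i\notin S$ for every $i\in[t]$, i.e. $s\in \cap_{i\in[t]}\Ap(S,m_i)$. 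The ``if'' direction is immediate from the definition of the Apery set; the ``only if'' direction is the statement that $s\in M+S$ means $s = m_i + s'$ for some $i$ and $s'\in S$, which is precisely $s - m_i\in S$ for some $i$. This gives $S\setminus\big((M+S)\cup\{0\}\big) = \cap_{i\in[t]}\Ap(S,m_i)\setminus\{0\}$, and the disjointness of the union is clear since one piece lies in $S$ and the other in $\CaH(S)=\CaC_S\setminus S$.

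For the MED claim, I would apply the characterisation in Proposition \ref{caracterizacion_MED}: $T$ is a MED-semigroup if and only if for any two non-extremal-ray minimal generators $u, v$ of $T$, there is an extremal generator $w$ of $T$ with $u + v - w\in T$. The natural extremal generators of $T$ are the $m_i$ themselves (which lie on the rays), and the non-extremal minimal generators of $T$ are contained in $\cap_{i\in[t]}\Ap(S,m_i)\setminus\{0\}$, the set computed above. So given two such generators $u,v$, I must produce an index $k$ with $u+v-m_k\in T$. Since $u\in S$ and $v\in M+S$ (indeed any nonzero element of $T$ other than the $m_i$ lies in $M+S$, so $v = m_k + v'$ for some $k$ and $v'\in S$), I get $u + v - m_k = u + v' \in S + S = S$, and moreover $u+v-m_k = u+v'$ lies in $M+S\subseteq T$ because $u\in M+S$ as well (being a generator of $T$, hence in $T\setminus\{0\} = M+S$). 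This is the crux.

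The main obstacle I anticipate is matching the hypotheses of Proposition \ref{caracterizacion_MED} to $T$ cleanly, since that proposition is phrased in terms of the \emph{minimal} generating set $E_T\sqcup A_T$ of $T$, and I must verify that the role of $E$ (the $i$-multiplicities of $T$) is played by elements on the rays while every element of $A_T$ lies in $\cap_{i\in[t]}\Ap(S,m_i)$; one must check that $A_T\subseteq M+S$ so that each such generator is divisible by some $m_k$ in $T$. I would also double-check the degenerate possibility that some $m_i$ is not itself a minimal generator of $T$, and confirm the argument still identifies a valid extremal generator on each ray $\tau_i$. Once the containment $T\setminus\{0\} = M+S$ and the generator description are nailed down, the verification that $u+v-m_k\in T$ is a short computation as sketched above.
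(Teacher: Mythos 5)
Your proposal is correct, and its gap-formula half is essentially the paper's own argument: both reduce to the observation that a nonzero $s\in S$ lies outside $M+S$ exactly when $s-m_i\notin S$ for every $i\in[t]$, combined with $T\subseteq S$ and $\CaC_T=\CaC_S$ (which you, somewhat more explicitly than the paper, justify via Lemma \ref{affinePcth}). For the MED half you take a genuinely different route: you verify the generator criterion of Proposition \ref{caracterizacion_MED}, producing for non-extremal minimal generators $u,v$ of $T$ an index $k$ with $u+v-m_k\in T$, via $v=m_k+v'$ and $u+v'\in (M+S)+S\subseteq M+S$. The paper instead argues straight from the definition: it shows every nonzero $x\in \cap_{i\in[t]}\Ap(T,m_i)$ is a minimal generator of $T$ by a contradiction that is the mirror image of your computation ($x=y+z$ with $y,z\in M+S$ forces $x-m_j\in M+S\subseteq T$, contradicting $x\in \Ap(T,m_j)$). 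The paper's route buys slightly more — it exhibits the minimal generating set of $T$ explicitly as $M\sqcup\left(\cap_{i\in[t]}\Ap(T,m_i)\setminus\{0\}\right)$ — while yours buys a shorter verification once Proposition \ref{caracterizacion_MED} is available; both rest on the ideal identity $T\setminus\{0\}=M+S$. Finally, the step you flag as your ``main obstacle'' is indeed needed, but it is equally implicit in the paper's proof and is routine: if $x=m_j+s$ is a nonzero element of $T\cap\tau_i$, extremality of the ray forces $m_j,s\in\tau_i$, hence $j=i$ and $x\geq m_i$ componentwise; therefore $m_i=\mult_i(T)$, and $m_i$ cannot split as a sum of two nonzero elements of $T$, so $M$ is exactly the set of $i$-multiplicities of $T$ and consists of minimal generators, as both arguments require.
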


\begin{proof}
    By construction, $M$ is a subset of the minimal generating set of $T$, and $M\cap \big(\cap _{i\in [t]} \Ap(T,m_i)\big)$ is the empty set. Let $x\neq 0$ be an element of $\cap _{i\in [t]} \Ap(T,m_i)$. Suppose $x$ is not a minimal generator of $T$, that is, $x=y+z$ for some non-null $y,z\in T$. Therefore, $x=m_j+s+m_k+s'$ for some  $j,k\in [t]$, and $s,s'\in S$. Thus, $x-m_j\in M+S\subset T$, which is not possible since $x\in \cap _{i\in [t]} \Ap(T,m_i)$. Hence, $M\sqcup \left(\cap _{i\in [t]} \Ap(T,m_i)\setminus\{0\}\right)$ is the minimal generating set of $T$. We conclude that $T$ is a MED-semigroup.

    Since $T\subset S$, $\CaH(S)\subset \CaH(T)$. Consider $x\in \cap _{i\in [t]} \Ap(S,m_i)\setminus\{0\}$. Hence, $x-m_i\notin S$ for any $i\in [t]$. This means that $x\in \CaH(T)$, and $\CaH(S)\sqcup \left(\cap _{i\in [t]} \Ap(S,m_i)\setminus\{0\}\right)$ is a subset of $\CaH(T)$. Let $x$ be an element in $\CaH(T)$. If $x\notin S$, then $x\in \CaH(S)$. In the other case, if $x-m_i\in S$ for some $i\in [t]$, then $x\in T$, which is impossible. So, $x\in \cap _{i\in [t]} \Ap(S,m_i)\setminus\{0\}$.
\end{proof}

The above lemma can be used to compute some steps in Algorithm \ref{computeI(S)fixedM}. Besides, some useful results are obtained.

\begin{proposition}\label{proposition_E_plus_S}
It holds that:
    \begin{itemize}
        \item $S$ is a $\CaC$-semigroup if and only if $T$ is a $\CaC$-semigroup.
        \item $T=\left(S\setminus  \cap _{i\in [t]} \Ap(S,m_i) \right)\cup \{0\}$.
    \end{itemize}
\end{proposition}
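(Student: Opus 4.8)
The plan is to establish the second bullet first, since it is a purely set-theoretic identity independent of any finiteness hypothesis, and then to derive the first bullet from the gap description already obtained in Lemma \ref{MED_E_plus_S}.

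For the second bullet, I would compute the complement of the Apery intersection inside $S$ directly. For $x\in S$, observe that $x\notin \cap_{i\in[t]}\Ap(S,m_i)$ if and only if $x-m_i\in S$ for some $i\in[t]$, which is equivalent to $x\in m_i+S$ for that $i$, that is, $x\in M+S$. Here I use that $m_i+S\subseteq S$, so intersecting with $S$ changes nothing. Hence $S\setminus \cap_{i\in[t]}\Ap(S,m_i)=M+S=T\setminus\{0\}$. Since $0\in \cap_{i\in[t]}\Ap(S,m_i)$ (because $-m_i\notin S$ as $m_i\neq 0$) while $0\notin M+S$, adjoining the zero element recovers exactly $T$, giving the claimed equality $T=\big(S\setminus \cap_{i\in[t]}\Ap(S,m_i)\big)\cup\{0\}$.

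For the first bullet, I would invoke Lemma \ref{MED_E_plus_S}, which yields $\CaH(T)=\CaH(S)\sqcup\big(\cap_{i\in[t]}\Ap(S,m_i)\setminus\{0\}\big)$. Because $M\subset M+S=T\setminus\{0\}$ and $M\cap\tau_i\neq\emptyset$ for every $i\in[t]$, Lemma \ref{affinePcth} guarantees that $T$ is affine with $\CaC_T=\CaC_S$, so that $\CaH(T)$ is genuinely the gap set $\CaC_T\setminus T$ of $T$ relative to its own cone. Now $S$ is a $\CaC$-semigroup precisely when $\CaH(S)$ is finite, and likewise $T$ is a $\CaC$-semigroup precisely when $\CaH(T)$ is finite. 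As the second summand $\cap_{i\in[t]}\Ap(S,m_i)\setminus\{0\}$ is always finite, the displayed disjoint union shows that $\CaH(T)$ is finite if and only if $\CaH(S)$ is finite, which is the asserted equivalence.

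The argument is short because the heavy lifting, namely the gap identity and the realisation of $T$ as a well-defined affine semigroup with $\CaC_T=\CaC_S$, has already been carried out in Lemma \ref{MED_E_plus_S} and Lemma \ref{affinePcth}. The only point demanding care is the finiteness of $\cap_{i\in[t]}\Ap(S,m_i)$; this is exactly the fact recalled before the definition of MED-semigroup, that the intersection of the Apery sets taken over one element per extremal ray is finite. Once this is in hand no further computation is required, and I expect no genuine obstacle.
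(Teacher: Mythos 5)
Your proof is correct and follows essentially the route the paper intends: the paper states Proposition \ref{proposition_E_plus_S} without proof, as an immediate consequence of Lemma \ref{MED_E_plus_S}, and your argument supplies exactly those details. The direct verification that $S\setminus\bigcap_{i\in[t]}\Ap(S,m_i)=M+S$ (giving the second bullet), together with the gap identity of Lemma \ref{MED_E_plus_S}, the finiteness of $\bigcap_{i\in[t]}\Ap(S,m_i)$, and Lemma \ref{affinePcth} ensuring $\CaC_T=\CaC_S$ (giving the first bullet), is precisely the intended reasoning, and I see no gaps in it.
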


To use the $I(S)$-semigroup $T=(M+S)\cup \{0\}$ in a computational way, it is necessary to know a generating set of $T$. One such generating set is determined from a finite set denoted by $\Gamma$. To do that, we take into account that, for any $n_j\in E\sqcup A$, there exists a minimum non-zero integer $q_j$ such that $q_jn_j$ is equal to $\sum _{i=1}^t \mu_i m_i$ for some integers $\mu_i\in \N$. So, we consider the set 
\begin{equation}\label{gamma}
    \Gamma=\left\{\sum_{j=1}^r \lambda_j n_j\mid \lambda_j\in [0,q_j-1]\right\}.
\end{equation}

\begin{lemma}\label{generadores_E_plus_S}
    Let $S$ be an affine semigroup and $M=\{m_1,\ldots ,m_t\}\subset S$ such that $m_i\in \tau_i\setminus\{0\}$ for any $i\in [t]$. The set $\bigcup _{i\in [t]}\{m_i+\gamma \mid \gamma \in \Gamma \}$ is a system of generators of the $I(S)$-semigroup $T=(M+S)\cup \{0\}$.
\end{lemma}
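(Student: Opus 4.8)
The plan is to show that $G:=\bigcup_{i\in[t]}\{m_i+\gamma\mid\gamma\in\Gamma\}$ generates $T$ by proving the two inclusions $\langle G\rangle\subseteq T$ and $T\subseteq\langle G\rangle$. The first is immediate: each $m_i+\gamma\in G$ lies in $M+S\subseteq T$ since $\gamma\in S$; as $T$ is a semigroup containing $0$, the submonoid $\langle G\rangle$ it generates stays inside $T$.

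For the reverse inclusion, I would first observe that $0\in\Gamma$ (take all $\lambda_j=0$), so each $m_i=m_i+0$ already belongs to $G$. Now pick an arbitrary non-zero $x\in T$. Since $T\setminus\{0\}=M+S$, write $x=m_i+s$ with $i\in[t]$ and $s\in S$, and expand $s=\sum_{j=1}^r\lambda_j n_j$ with $\lambda_j\in\N$. The key manoeuvre is to reduce each coefficient modulo $q_j$: performing the Euclidean division $\lambda_j=q_j a_j+b_j$ with $b_j\in[0,q_j-1]$ and $a_j\in\N$, and then replacing each block $q_j n_j$ by the combination $\sum_k\mu_{j,k}m_k$ provided by its defining relation $q_j n_j=\sum_k\mu_{j,k}m_k$.

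This rewriting yields $s=\sum_k c_k m_k+\gamma_0$, where $\gamma_0:=\sum_{j=1}^r b_j n_j$ is an element of $\Gamma$ by construction and $c_k:=\sum_{j=1}^r a_j\mu_{j,k}\in\N$. Hence $x=(m_i+\gamma_0)+\sum_k c_k m_k$ is a non-negative integer combination of $m_i+\gamma_0\in G$ and of the generators $m_k\in G$, so $x\in\langle G\rangle$. Together with $0\in\langle G\rangle$, this gives $T\subseteq\langle G\rangle$ and completes the argument.

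The only substantive point---the main obstacle---is the reduction step: one must check that after trading the multiples $q_j n_j$ for combinations of the $m_k$, the leftover term $\gamma_0$ really carries all its coefficients in the range $[0,q_j-1]$, so that it lands in $\Gamma$, while the traded part contributes only further copies of the $m_k\in G$. This rests on the existence of the integers $q_j$, which in turn follows from $\CaC_M=\CaC_S$ (each $n_j$ lying in the rational cone generated by $M$); I would recall this briefly, although it is already built into the definition of $\Gamma$.
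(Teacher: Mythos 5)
Your proof is correct and takes essentially the same approach as the paper's: both write a non-zero $x\in T$ as $x=m_i+\sum_{j=1}^r\lambda_j n_j$ and reduce each coefficient $\lambda_j$ modulo $q_j$, trading each block $q_jn_j$ for the combination $\sum_k\mu_{j,k}m_k$ so that the leftover $\gamma_0=\sum_j b_j n_j$ lands in $\Gamma$. Your write-up is in fact slightly more explicit than the paper's (the Euclidean division, the remark that $0\in\Gamma$ so each $m_k$ itself lies in the generating set, and the justification of the existence of the $q_j$ via the cone condition), but the underlying argument is the same.
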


\begin{proof}
    By construction, the affine semigroup generated by $\bigcup _{i\in [t]}\{m_i+\gamma \mid \gamma \in \Gamma \}$ is a subset of $T$. Consider $x\in T$, hence there are some $\lambda_1,\ldots ,\lambda_r\in \N$, and $i\in [t]$ such that $x= m_i+\sum_{j=1}^r \lambda _jn_j$. If for some $k\in [r]$, $\lambda_k\ge q_k$, where $q_k$ is determined from the definition of $\Gamma$, then $\lambda_kn_k = \sum_{h=1}^t \mu_hm_h$ with $\mu_h\in \N$ for every $h\in [t]$. Therefore, $x=\sum_{i=1}^t \nu_i m_i+ \sum_{i=1}^r \xi_i n_i$ for some $\nu_1,\ldots ,\nu_t\in \N$, and $\xi_1,\ldots ,\xi_r\in [0,q_1-1]\times \cdots \times [0,q_r-1]$. Thus, the lemma holds.
\end{proof}

From the previous results, we show an example of a MED-semigroup constructed using Lemma \ref{MED_E_plus_S}.

\begin{example}\label{MED_example2}
Let $S\subset \N^2$ be the affine semigroup minimally generated by
$$\{ (5, 1), (6, 2),  (8, 2), (9, 2), (12, 3)\},$$
and $M=\{ (5, 1), (6, 2)\}$. We obtain that the set $\Gamma$ is equal to 
\begin{multline}\label{gamma_example}
    \{(0,0),(8,2),(9,2),(12,3),(17,4),(18,4),(20,5),(21,5),(24,6),(26,6),\\(27,6), (29,7), (30,7),(32,8),(33,8),(35,8),(36,9),(38,9),(39,9),\\(41,10), (42,10),(44,11),(45,11),(47,11),(50,12),(51,12),(53,13),\\(54,13), (59,14),(62,15), (63,15),(71,17)\}.
\end{multline}
We consider $T=(M+S)\cup \{0\}$ and compute its minimal generating set from the set $\bigcup _{i\in M}\{m_i+\gamma \mid \gamma \in \Gamma \}$ determined in Lemma \ref{generadores_E_plus_S}. Hence, the minimal generating set of $T$ is 
$$\{(5, 1), (6, 2), (13, 3), (14, 3), (14, 4), (15, 4), (17, 4), (18, 5)\}.$$ 
Figure \ref{MED_Eplus_S} gives us a graphical representation of $T$. The empty circles are the gaps of $T$, and the full red circles are elements of $T$.
\begin{figure}[H]
    \centering
    \includegraphics[scale=.46]{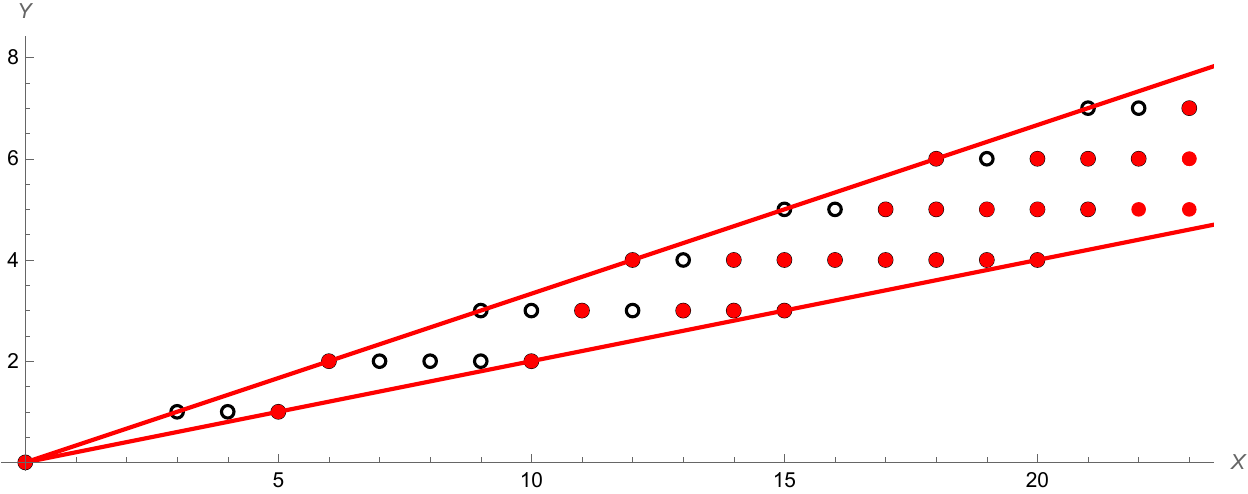}
    \caption{MED-semigroup $T=(E+S)\cup \{0\}$.}
    \label{MED_Eplus_S}
\end{figure}
\end{example}

Numerical MED-semigroups have a concrete structure. In \cite{MED_numericos}, it is proved that a numerical semigroup $T$ is a MED-semigroup if and only if there exist a numerical semigroup $S$ and $m \in S \setminus\{0\}$ such that $T = (m +S)\cup \{0\}$. From the above statement, a natural question is born: is Lemma \ref{MED_E_plus_S} the natural generalisation of the numerical case? The following example answers that it is not true.

\begin{example}
Let $S$ be again the MED-semigroup considered in Example \ref{MED_example2}, which is shown in Figure \ref{MED_decomposition}. As mentioned earlier, the empty circles are the gaps of $T$, and the filled red circles are elements of $T$.
\begin{figure}[H]
    \centering
    \includegraphics[scale=.48]{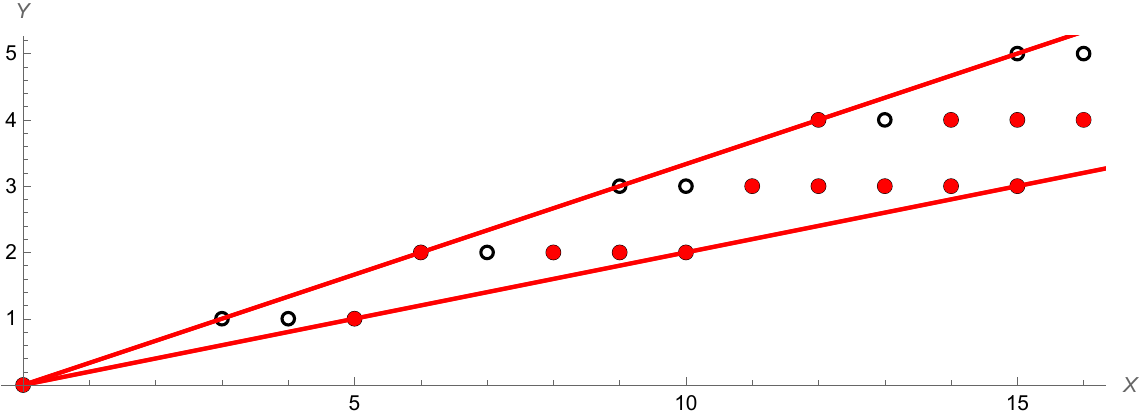}
    \caption{MED-semigroup generated by $\{ (5, 1), (6, 2),  (8, 2), (9, 2), (12, 3)\}$.}
    \label{MED_decomposition}
\end{figure}
Its minimal generating set is $E\sqcup A$, with $E=\{(5, 1), (6, 2)\}$, and $A=\{(8, 2), (9, 2), (12, 3)\}$. 
Let us prove that $S$ is not equal to $M+H$ with $H$ an affine semigroup and $M=\{m_1,m_2\}\subset S\setminus\{0\}$ satisfying $m_i\in \tau_i$ for $i=1,2$. Suppose there exists an affine semigroup $H$ and a set $M$ satisfying the specified conditions. So, $m_1$ has to be equal to $a(5, 1)$, and $m_2=b(6, 2)$ for some non-zero $a,b\in \N$. Thus, we deduce that either $(8,2)-a(5, 1)\in H$, or $(8,2)-b(6, 2)\in H$. Since $(8,2)-b(6, 2)\notin \CaC_S$, $(8,2)-b(6, 2)\notin H$. Analogously, for any $a\geq 2$, $(8,2)-a(5, 1)\notin \CaC_S$. If $a=1$ and assume $(8,2)-1(5, 1)=(3,1)\in H$, then $(6,2)+(3,1)\in S$, which it is not true. Hence, the MED-semigroup $S$ cannot be obtained from Lemma \ref{MED_E_plus_S}.
\end{example}

In spite of Lemma \ref{MED_E_plus_S} does not determine all non-numerical affine MED-semigroups, it induces, together with Proposition \ref{proposition_E_plus_S}, a characterisation of them.

\begin{theorem}
   Let $S$ be a semigroup minimally generated by $E\sqcup A$. Then, $S$ is MED-semigroup if and only if the $I(S)$-semigroup $(E+S)\cup \{0\}$ is equal to $S\setminus A$.
\end{theorem}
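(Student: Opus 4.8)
The plan is to read both implications off the second bullet of Proposition~\ref{proposition_E_plus_S}, applied with $M=E$. This is legitimate because $E=\{n_1,\ldots,n_t\}$ with $n_i=\mult_i(S)\in\tau_i\setminus\{0\}$, so $E$ meets the standing hypothesis $m_i\in\tau_i\setminus\{0\}$ required by Lemma~\ref{MED_E_plus_S} and Proposition~\ref{proposition_E_plus_S}. Writing $C:=\cap_{i\in[t]}\Ap(S,n_i)$, that proposition yields the identity
\[
(E+S)\cup\{0\}=(S\setminus C)\cup\{0\}.
\]

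The key observation is that $0\in C$ always holds: indeed $0\in S$ and $0-n_i=-n_i\notin\N^p$, so $0\in\Ap(S,n_i)$ for every $i\in[t]$. Consequently $0\notin S\setminus C$, and re-adding it gives
\[
(S\setminus C)\cup\{0\}=S\setminus(C\setminus\{0\}).
\]
On the other side, the definition of a MED-semigroup reads $C=A\sqcup\{0\}$, which, since $0\notin A$, is equivalent to $C\setminus\{0\}=A$. Thus the theorem reduces to comparing the two subsets $C\setminus\{0\}$ and $A$ of $S$.

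For the forward implication, assuming $S$ is a MED-semigroup gives $C\setminus\{0\}=A$, whence $(E+S)\cup\{0\}=S\setminus(C\setminus\{0\})=S\setminus A$. For the converse, from $(E+S)\cup\{0\}=S\setminus A$ together with the two displayed identities I obtain $S\setminus(C\setminus\{0\})=S\setminus A$; since both $C\setminus\{0\}$ and $A$ are subsets of $S$, taking complements inside $S$ forces $C\setminus\{0\}=A$, i.e. $C=A\sqcup\{0\}$, so $S$ is a MED-semigroup.

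The only delicate point---and hence the ``main obstacle''---is the bookkeeping around the element $0$: it always lies in $C$ but never in $A$, and overlooking this would break both set identities above. Once Proposition~\ref{proposition_E_plus_S} is in hand, no further structural input about $S$ is needed, so the whole argument is essentially a chain of equivalences.
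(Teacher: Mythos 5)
Your proposal is correct and follows essentially the same route as the paper: the paper states this theorem as a direct consequence of Lemma~\ref{MED_E_plus_S} and Proposition~\ref{proposition_E_plus_S} (applied with $M=E$), which is exactly your argument. Your explicit bookkeeping of the element $0$ (noting $0\in\cap_{i\in[t]}\Ap(S,n_i)$ but $0\notin A$, and cancelling complements inside $S$) just fills in the details the paper leaves implicit.
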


As we have seen, the structure of general affine MED-semigroups is more complex than numerical semigroups. Now, we present a decomposition using some ideals of an affine semigroup, which can be applied to any affine semigroup, to introduce a different family of affine MED-semigroups. Hereafter, consider the finite set $S_0=\cap_{i\in [t]} \{s\in S\mid s-n_i\notin \CaC_S\}$, and $S_i=  \{x\in \CaC_S\mid x+n_i\in S\}$ for all $i\in [t]$. Note that each $n_i+S_i$ is an ideal of $S$, and thus $\bigcup_{i\in [t]} (n_i+S_i)$ is also an ideal of $S$. 

\begin{lemma}\label{lemma_decomposition}
    Let $S$ be a semigroup minimally generated by $E\sqcup A$ with $E=\{n_1,\ldots, n_t\}$. Then, $S=S_0\cup \bigcup_{i\in [t]} (n_i+S_i)$.
\end{lemma}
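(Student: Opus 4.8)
The plan is to prove the equality by double inclusion, which reduces to a short case analysis on whether a given element of $S$ lies in $S_0$.

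First I would establish the inclusion $S_0\cup\bigcup_{i\in[t]}(n_i+S_i)\subseteq S$. This is immediate from the definitions: $S_0$ is by construction a subset of $S$, and for each $i\in[t]$ and each $x\in S_i$ we have $x+n_i\in S$ by the very definition $S_i=\{x\in\CaC_S\mid x+n_i\in S\}$, so $n_i+S_i\subseteq S$ for every $i$.

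The reverse inclusion $S\subseteq S_0\cup\bigcup_{i\in[t]}(n_i+S_i)$ is the substantive part, and I would handle it by dichotomy. Fix $s\in S$. If $s\in S_0$, there is nothing to prove. Otherwise $s\notin S_0$, which, since $S_0=\cap_{i\in[t]}\{s\in S\mid s-n_i\notin\CaC_S\}$, means that the defining condition $s-n_i\notin\CaC_S$ fails for at least one index; that is, there exists $k\in[t]$ with $s-n_k\in\CaC_S$. Setting $x=s-n_k$, I then have $x\in\CaC_S$ and $x+n_k=s\in S$, so $x\in S_k$ and hence $s=n_k+x\in n_k+S_k\subseteq\bigcup_{i\in[t]}(n_i+S_i)$. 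This exhausts both cases and gives the claim.

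I do not expect a genuine obstacle here: the argument is a direct unpacking of the definitions of $S_0$ and $S_i$. The only point requiring slight care is the logical negation of membership in $S_0$. Since $S_0$ is an intersection over $i\in[t]$ of the sets $\{s\in S\mid s-n_i\notin\CaC_S\}$, an element already known to lie in $S$ but not in $S_0$ must fail the membership condition for some single index $k$, giving $s-n_k\in\CaC_S$. This is exactly what is needed to place $s$ into the corresponding piece $n_k+S_k$, so no further structural input about the cone or the generators is required.
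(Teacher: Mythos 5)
Your proposal is correct and follows essentially the same argument as the paper: the easy inclusion from the definitions, then a dichotomy on whether $x-n_i\notin\CaC_S$ for all $i\in[t]$ (giving $x\in S_0$) or $x-n_k\in\CaC_S$ for some $k$ (giving $x\in n_k+S_k$). Your only difference is presentational, phrasing the dichotomy as membership versus non-membership in $S_0$ and then unpacking the negation, which amounts to the same case analysis.
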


\begin{proof}
By definitions, $S_0\cup \bigcup_{i\in [t]} (n_i+S_i)\subseteq S$. Thus, it is enough to prove the opposite inclusion. Given $x\in S$, there are two possibilities, either $x-n_i\notin \CaC_S$ for all $i\in [t]$, or there exists $k\in [t]$ such that $x-n_k\in \CaC_S$. If the first case holds, then $x\in S_0$. In the other one, $x\in n_k+S_k$. So, $x\in S_0\cup \bigcup_{i\in [t]} (n_i+S_i)$, and the lemma is proved.
\end{proof}

\begin{corollary}
    Any affine semigroup $S$ is of the form $S_0\cup (X+S)$, where $X\subset S$ is a $S$-incomparable set.
\end{corollary}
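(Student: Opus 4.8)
The plan is to reduce the claim to the decomposition already established in Lemma \ref{lemma_decomposition} together with the representation of ideals from Theorem \ref{thrIdealX+S}. By Lemma \ref{lemma_decomposition} we have $S = S_0 \cup \bigcup_{i\in[t]}(n_i+S_i)$, so it suffices to rewrite the union $P := \bigcup_{i\in[t]}(n_i+S_i)$ in the form $X+S$ for a single $S$-incomparable set $X$, and then substitute back.

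First I would verify that $P$ is an ideal of $S$. As noted just before Lemma \ref{lemma_decomposition}, each $n_i + S_i$ is an ideal of $S$: for $x\in S_i$ one has $x+n_i\in S$ by definition of $S_i$, so $n_i+S_i\subseteq S$; and for any $s\in S$ the element $x+s$ lies in $\CaC_S$ and satisfies $(x+s)+n_i = (x+n_i)+s\in S$, whence $x+s\in S_i$ and $(n_i+S_i)+S\subseteq n_i+S_i$. Since a finite union of ideals of $S$ is again an ideal of $S$, the set $P$ is an ideal.

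Next I would invoke the uniqueness-of-generators machinery. By Theorem \ref{thrIdealX+S} every ideal of $S$ is of the form $X+S$ for a unique $S$-incomparable set $X$; concretely, Lemma \ref{imsg} identifies this $X$ with $imsg_S(P)=Minimals_{\leq_S}(P)$, which is contained in $P\subseteq S$. Taking $X = imsg_S(P)$ therefore gives $P = X+S$ with $X$ an $S$-incomparable subset of $S$, and plugging this into the decomposition yields $S = S_0\cup(X+S)$, as required.

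I do not expect a genuine obstacle here, since the substantive work has been front-loaded into Lemma \ref{lemma_decomposition} and Theorem \ref{thrIdealX+S}. The only points deserving care are checking that $P$ is honestly an ideal, so that the representation theorem is applicable, and noting that the resulting $X$ is a subset of $S$; both follow immediately from the definitions of $S_i$ and of $imsg_S$.
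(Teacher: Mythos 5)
Your proposal is correct and is essentially the paper's own proof, which simply cites Lemma \ref{lemma_decomposition} and Theorem \ref{thrIdealX+S}; you have merely made explicit the routine verification (already noted in the paper just before Lemma \ref{lemma_decomposition}) that $\bigcup_{i\in[t]}(n_i+S_i)$ is an ideal of $S$.
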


\begin{proof}
A consequence of Lemma \ref{lemma_decomposition} and applying Theorem \ref{thrIdealX+S}.
\end{proof}

The next proposition shows a family of MED-semigroups determined from its minimal generating set and its associated sets $S_i$. Note that the fixed conditions imply that $S_0=\{0\}$.

\begin{proposition}\label{MEDtipo2}
 Let $S$ be an affine semigroup minimally generated by $E\sqcup A$ with $E=\{n_1,\ldots, n_t\}$, and such that for any $m\in A$, $m-n_k\in \CaC_S$ for some $n_k\in E$. If the set $S_k$ is a semigroup, then $S$ is a MED-semigroup.
\end{proposition}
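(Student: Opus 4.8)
The plan is to verify the minimal-generator criterion for MED-semigroups established in Proposition~\ref{caracterizacion_MED}: it suffices to show that for every pair $n_i,n_j\in A$ there exists $k\in[t]$ with $n_i+n_j-n_k\in S$. The whole argument then reduces to a single membership computation inside the sets $S_k=\{x\in\CaC_S\mid x+n_k\in S\}$, exploiting the hypothesis that $S_k$ is closed under addition.

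First I would record the elementary but crucial observation that the assumption ``$m-n_k\in\CaC_S$ for some $n_k\in E$'' places the relevant differences inside the sets $S_k$. Indeed, if $n_i\in A$ and $n_i-n_k\in\CaC_S$, then since $n_i\in S$ we have $(n_i-n_k)+n_k=n_i\in S$, so by definition $n_i-n_k\in S_k$. Thus each generator of $A$ contributes an element $n_i-n_k$ to the corresponding $S_k$.

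The core step is then purely formal. Fixing the index $k$ attached to the pair $n_i,n_j\in A$ by the hypothesis, we have $n_i-n_k,\,n_j-n_k\in S_k$; since $S_k$ is a semigroup it is closed under addition, so $(n_i-n_k)+(n_j-n_k)=n_i+n_j-2n_k\in S_k$. Applying the defining property of $S_k$ once more gives $(n_i+n_j-2n_k)+n_k=n_i+n_j-n_k\in S$. This is exactly the condition of Proposition~\ref{caracterizacion_MED}, and as it holds for every pair in $A$, the semigroup $S$ is MED.

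The main obstacle---and the point where the precise reading of the hypothesis matters---is the matching of indices: the computation above requires both $n_i-n_k$ and $n_j-n_k$ to lie in the \emph{same} $S_k$, which forces a common $n_k\in E$ with $n_i-n_k,\,n_j-n_k\in\CaC_S$. I would therefore make explicit that the hypothesis supplies a fixed $k\in[t]$ valid for all of $A$ (equivalently, that the $S_k$ assumed to be a semigroup is the one for this common index), since with distinct indices $k_i\neq k_j$ the quantity $n_i+n_j-n_{k_i}=(n_i-n_{k_i})+n_j$ is not controlled: here $n_i-n_{k_i}\in S_{k_i}\subseteq\CaC_S$, but this alone does not place $(n_i-n_{k_i})+n_j$ in $S$. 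As a consistency check I would also note, via Lemma~\ref{lemma_decomposition}, that the standing conditions force $S_0=\{0\}$, so that $S\setminus\{0\}=\bigcup_{i\in[t]}(n_i+S_i)$; this confirms that the structure of $S$ is governed entirely by the ideals $n_i+S_i$ and offers an alternative route to the same conclusion.
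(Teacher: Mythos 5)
Your proof is correct and follows essentially the same route as the paper's: both verify the criterion of Proposition~\ref{caracterizacion_MED} by placing $m-n_k$ and $n-n_k$ in the same $S_k$, invoking closure of $S_k$ under addition, and translating back by $n_k$ to get $m+n-n_k\in S$. Your explicit observation that the hypothesis must be read as supplying one common index $k$ valid for all of $A$ is a point the paper's proof uses silently, so it is a useful clarification rather than a different argument.
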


\begin{proof}
    Given any $m,n\in A$, we have that $m-n_k, n-n_k\in S_k$. Since $S_k$ is a semigroup, $m-n_k+ n-n_k\in S_k$. Hence, $m+ n-n_k \in S$. By Proposition \ref{caracterizacion_MED}, we conclude $S$ is a MED-semigroup.
\end{proof}

\begin{example}
Let $S$ be again the MED-semigroup considered in Example \ref{MED_example2}. Note that $S$ satisfies the hypothesis of Proposition \ref{MEDtipo2} since $(8, 2)-(5,1), (9, 2)-(5,1), (12, 3)-(5,1)\in \CaC_S$, and $S_1$ is a semigroup (in particular, $S_1$ is equal to $\CaC_S$).
\end{example}

We have seen two distinct families of MED-semigroups. As far as the authors are aware, no additional constructions have been identified. We threw the open problem of whether more distinct constructions exist beyond those already proposed for obtaining MED-semigroups.

\section{Apery sets and the membership problem for affine semigroups}\label{AperySection}

In this section, we show an algorithm for checking whether an integer vector belongs to an affine simplicial semigroup using some of its Apery sets. For some algorithms appearing in this work, we need to be able to compute the set $\cap _{i\in [t]} \Ap(S,m_i)$ for $M=\{m_1,\ldots ,m_t\}\subset S$ such that $m_i\in \tau_i\setminus\{0\}$ for any $i\in [t]$. To compute the intersection $\cap _{i\in [t]} \Ap(S,m_i)$, the following lemma can be used, which has a straightforward proof. 
Alternative methods using Gröbner basis techniques to compute this intersection are discussed in \cite{shortOjedaVigneron} and \cite{shortPison}. For these methods, you have to introduce a new variable for each element in $M$ which is not a minimal generator of $S$.

\begin{lemma}\label{GammaLemma}
Let $S$ be an affine semigroup, and $M=\{m_1,\ldots ,m_t\}\subset S$ such that $m_i\in \tau_i\setminus\{0\}$ for any $i\in [t]$. Then, $\cap _{i\in [t]} \Ap(S,m_i)= \{s\in \Gamma \mid s-m_i\notin S, \forall i\in [t]\}$.
\end{lemma}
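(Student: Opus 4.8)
The plan is to prove the two set inclusions separately; the reverse inclusion is immediate, while the forward inclusion carries all the content.

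First I would record that $\Gamma\subseteq S$: by its definition in \eqref{gamma}, every element of $\Gamma$ is a non-negative integer combination $\sum_{j=1}^r\lambda_j n_j$ of the generators of $S$, hence lies in $S$. Consequently, if $s$ belongs to the right-hand side, then $s\in\Gamma\subseteq S$ together with $s-m_i\notin S$ for all $i\in[t]$ gives $s\in\Ap(S,m_i)$ for every $i$, so $\{s\in\Gamma\mid s-m_i\notin S,\ \forall i\in[t]\}\subseteq\cap_{i\in[t]}\Ap(S,m_i)$.

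For the converse, let $s\in\cap_{i\in[t]}\Ap(S,m_i)$, so that $s\in S$ and $s-m_i\notin S$ for every $i\in[t]$. Since $s\in S$, fix a representation $s=\sum_{j=1}^r\lambda_j n_j$ with $\lambda_j\in\N$; I will show that necessarily $\lambda_j\le q_j-1$ for all $j$, which places $s$ in $\Gamma$ and, combined with $s-m_i\notin S$, puts $s$ in the right-hand side. Suppose, for contradiction, that $\lambda_k\ge q_k$ for some $k$. By the defining property of $q_k$ we have $q_k n_k=\sum_{i=1}^t\mu_i m_i$ with $\mu_i\in\N$; since $n_k\neq 0$ and $q_k\ge 1$, the vector $q_k n_k$ is non-zero, forcing $\mu_{i_0}\ge 1$ for some $i_0\in[t]$. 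Writing $\lambda_k n_k=(\lambda_k-q_k)n_k+q_k n_k$ and substituting yields
\[
s=\sum_{j\neq k}\lambda_j n_j+(\lambda_k-q_k)n_k+\sum_{i=1}^t\mu_i m_i,
\]
and isolating one copy of $m_{i_0}$ gives
\[
s-m_{i_0}=\sum_{j\neq k}\lambda_j n_j+(\lambda_k-q_k)n_k+(\mu_{i_0}-1)m_{i_0}+\sum_{i\neq i_0}\mu_i m_i\in S,
\]
where membership in $S$ holds because every coefficient is non-negative (using $\lambda_k-q_k\ge 0$ and $\mu_{i_0}-1\ge 0$) and each $n_j,m_i$ lies in $S$. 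This contradicts $s-m_{i_0}\notin S$. Hence the fixed representation satisfies $\lambda_j\le q_j-1$ for all $j$, so $s\in\Gamma$; this establishes $\cap_{i\in[t]}\Ap(S,m_i)\subseteq\{s\in\Gamma\mid s-m_i\notin S,\ \forall i\in[t]\}$, and the two inclusions give the claimed equality.

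The only delicate point — the main obstacle in an otherwise routine verification — is the step guaranteeing $\mu_{i_0}\ge 1$: after replacing a high coefficient via $q_k n_k=\sum_i\mu_i m_i$, we must know at least one $\mu_{i_0}$ is strictly positive so that subtracting $m_{i_0}$ leaves all coefficients non-negative. This is precisely what makes $s-m_{i_0}\in S$ and produces the contradiction, and it is secured by the non-vanishing of $q_k n_k$, which in turn rests on $n_k\neq 0$ and $q_k\ge 1$.
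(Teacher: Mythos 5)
Your proof is correct: the paper omits the proof of this lemma entirely (it is declared to have ``a straightforward proof''), and your argument supplies exactly the intended one, using the same coefficient-reduction trick ($\lambda_k\ge q_k$ implies one can trade $q_kn_k$ for $\sum_i\mu_i m_i$ and then subtract some $m_{i_0}$ with $\mu_{i_0}\ge 1$) that the paper itself employs in the proof of Lemma~\ref{generadores_E_plus_S}. In particular, your care about why some $\mu_{i_0}\ge 1$ (from $q_kn_k\ne 0$) is precisely the point that makes the contradiction with $s-m_{i_0}\notin S$ go through.
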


\begin{example}
    Let $S$ be again the affine semigroup considered in Example \ref{MED_example2}. By Lemma \ref{GammaLemma}, we have that $\Ap(S,(5,1))\cap \Ap(S,(6,2)) = \{(0, 0), (8, 2), (9, 2), (12, 3)\}$. Hence, as we have just known, $S$ is a MED-semigroup. 
\end{example}

Coming back to the membership problem, note that for any $x\in \CaC_S$ such that $x-n\notin S$ for all $n\in E$, there are only two possibilities: $x\notin S$ or $x\in \cap _{i\in [t]} \Ap(S,n_i)$, i.e. $x$ belongs to $S$. This simple fact lets us introduce Algorithm \ref{pertenencia_con_apery} for checking if an element in $\CaC_S$ belongs to $S$.

\begin{algorithm}[H]
\caption{Checking the membership problem for a simplicial affine semigroup.}\label{pertenencia_con_apery}
\KwIn{A $\CaC$-semigroup $S$ minimally generated by $E\sqcup A$, and $x\in \CaC_S$.}
\KwOut{True if $x\in S$, false in the other case.}
$\mathcal A \leftarrow \cap _{i\in [t]} \Ap(S,n_i) \setminus\{0\}$\\
$y\leftarrow x$\\
$v\leftarrow (0,\dots,0)\in\N^t$\\
\For{$i\in [1,t]$}{
    \While{$y-n_i\in\CaC_S$}{
        \If{$y-n_i\in \mathcal A$}{
            \Return{True}
        }
        $y\leftarrow y-n_i$\\
        $v_i\leftarrow v_i+1$\\
    }
}
$V\leftarrow [0,v_1]\times \dots\times [0,v_{t}]$\\
\If{$x-\sum_{i=1}^t \lambda_i n_i\in \mathcal A$ for some $(\lambda_1,\dots,\lambda_t)\in V$}{\Return{True}}
\Return{False}\\
\end{algorithm}

Note that when Algorithm \ref{pertenencia_con_apery} is applied to MED-semigroups, the set used, $\cap_{i \in [t]} \Ap(S, n_i)$, has the lowest possible cardinality. We illustrate this algorithm on a MED-semigroup.

\begin{example}\label{ejMED}
Let $S$ be again the affine semigroup considered in Example \ref{MED_example2}, and we test if the element $x=(31,8)$ belongs to $S$. The element $(v_1,v_2)$ obtained is equal to $(2,3)$. Since $(9,2)\in \cap _{i\in [t]} \Ap(S,n_i)$, then $(9,2)\in S$.
\end{example}

\subsection*{Funding}
The last author is partially supported by grant PID2022-138906NB-C21 funded by MICIU/AEI/ 10.13039/501100011033 and by ERDF/EU.

Consejería de Universidad, Investigación e Innovación de la Junta de Andalucía project ProyExcel\_00868 and research group FQM343 also partially supported all the authors.

Proyecto de investigación del Plan Propio – UCA 2022-2023 (PR2022-004) partially supported the first and third-named authors.

This publication and research have been partially granted by INDESS (Research University Institute for Sustainable Social Development), Universidad de Cádiz, Spain.

\subsection*{Author information}

J. I. Garc\'{\i}a-Garc\'{\i}a. Departamento de Matem\'aticas/INDESS (Instituto Universitario para el Desarrollo Social Sostenible),
Universidad de C\'adiz, E-11510 Puerto Real (C\'{a}diz, Spain).
E-mail: ignacio.garcia@uca.es.

\noindent
R. Tapia-Ramos. Departamento de Matem\'aticas, Universidad de C\'adiz, E-11406 Jerez de la Frontera (C\'{a}diz, Spain).
E-mail: raquel.tapia@uca.es. 

\noindent
A. Vigneron-Tenorio. Departamento de Matem\'aticas/INDESS (Instituto Universitario para el Desarrollo Social Sostenible), Universidad de C\'adiz, E-11406 Jerez de la Frontera (C\'{a}diz, Spain).
E-mail: alberto.vigneron@uca.es.

\end{document}